\newcommand{\triv}{\mathds{1}}
\newcommand{\epsg}{\aG}
\newcommand{\delg}{\Delta^{\cG}}
\newcommand{\aG}{a^{\cG}}
\newcommand{\finplaces}{\places^0}
\title{Conductors of Twisted Weil--Deligne Representations}
\author{Matthew Bisatt and Ross Paterson}
\subjclass[2010]{11S40 (11G40 11G10 14G10)}
\address{School of Mathematics, Fry Building, University of Bristol, Bristol, BS8 1UG.}
\email{matthew.bisatt@bristol.ac.uk}
\email{rosspatersonmath@gmail.com}
\begin{document}
\begin{abstract}
We study the behaviour of conductors of $L$-functions associated to certain Weil--Deligne representations under twisting.  For each global field $K$ we prove a sharp upper bound for the conductor of the Rankin--Selberg $L$-function $L(A\boxtimes B,s)$ where $A,B/K$ are abelian varieties.
\end{abstract}
\maketitle
\section{Introduction}

$L$-functions are a number theoretic orchestra; an ensemble of algebra, geometry, and analysis.  One useful tool in this area is the ability to build new (conjecturally automorphic) $L$-functions from old $L$-functions through twisting.  In particular, given a global field $K$ and two abelian varieties $A,B/K$, one can construct the Rankin--Selberg convolution $L(A\boxtimes B,s)$.  There is significant motivation to study these $L$-functions already in the literature: the Rankin--Selberg method, together with the converse theorem, is one of the main lines of attack in the Langlands program \cites{MR2275908,MR3307911,MR1728875,MR1395406}.

Our objective is to study certain local invariants associated to such $L$-functions, namely: the Artin conductor exponent and the Swan conductor exponent\footnote{After all, every good orchestra needs a conductor.}.  In order to study these, it will be necessary to understand the associated local Weil--Deligne representations.  On this level we must consider the tensor product of representations corresponding to abelian varieties, since this corresponds to $A\boxtimes B$ via the local Langlands correspondence.  Our main result is the following.
\begin{theorem}[\Cref{thm:tensors of abvars}]\label{INTRO:thm:cond of tensor of abvars}
Let $A$ and $B$ be abelian varieties over a non-Archimedean local field $F$ of residue characteristic $p$, and write $\rho_A,\rho_B$ for the associated $\ell$-adic representations, and $\deg(A),\deg(B)$ for the degrees of the associated $L$-polynomials.  Define the constant
    \[C_p(A,B):=\min\bigg\{\max\set{2,p-1},\ \abs{\rho_A} - \deg(A),\ \abs{\rho_B}-\deg(B)\bigg\}.\]
Then the Artin conductor exponent of the associated $\ell$-adic representations $\rho_A$ and $\rho_B$ satisfies
    \[a(\rho_A \otimes \rho_B)\leq  \dim(\rho_A)a(\rho_B) + \dim(\rho_B)a(\rho_A) - C_p(A,B)\min \set{a(\rho_A), a(\rho_B)} - \bigg(\deg(A\boxtimes B)-\deg(A)\deg(B)\bigg).\]
    Moreover, if $a(\rho_A)\leq 1$ then in fact
    \[a(\rho_A \otimes \rho_B) = \dim(\rho_A)a(\rho_B) + \deg(B)a(\rho_A) - \bigg(\deg(A\boxtimes B)-\deg(A)\deg(B)\bigg).\]
\end{theorem}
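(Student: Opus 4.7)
The plan is to reformulate both claims in terms of Swan conductors via the identity $a(\rho)=\mathrm{Sw}(\rho)+\dim(\rho)-\dim(\rho^I)$ together with $\deg(X)=\dim(\rho_X^I)$ for $X\in\{A,B,A\boxtimes B\}$. Direct substitution and collection of terms reduces the displayed upper bound to the Swan inequality
\[\mathrm{Sw}(\rho_A\otimes\rho_B)\leq\dim(\rho_A)\mathrm{Sw}(\rho_B)+\dim(\rho_B)\mathrm{Sw}(\rho_A)+(\dim(\rho_A)-\deg(A))(\dim(\rho_B)-\deg(B))-C_p(A,B)\min\{a(\rho_A),a(\rho_B)\};\]
using the tame identity $a(\rho_A)=\dim(\rho_A)-\deg(A)$, the sharper equality reduces similarly to $\mathrm{Sw}(\rho_A\otimes\rho_B)=\dim(\rho_A)\mathrm{Sw}(\rho_B)$.

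The equality case is then immediate: $a(\rho_A)\leq 1$ forces $\mathrm{Sw}(\rho_A)=0$, so wild inertia acts on $\rho_A\otimes\rho_B$ only through its action on the $\rho_B$-factor, and the break decomposition of $\rho_A\otimes\rho_B$ consists of $\dim(\rho_A)$ copies of that of $\rho_B$, yielding the required Swan identity on the nose.

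For the general upper bound I would invoke a refined Swan-conductor inequality for tensor products. The naive bound $\mathrm{Sw}(\rho\otimes\sigma)\leq\dim(\rho)\mathrm{Sw}(\sigma)+\dim(\sigma)\mathrm{Sw}(\rho)$ becomes strict whenever $\rho,\sigma$ share a break $\lambda$, with a saving of $m_\lambda n_\lambda\lambda$ at that break. For abelian varieties, Grothendieck's monodromy theorem forces the wild inertia to act on $\rho_A$ through a finite $p$-group supported on a subspace of dimension at most $\dim(\rho_A)-\deg(A)$, and similarly for $\rho_B$; this accounts for two of the three inputs to $C_p(A,B)$. The $\max\{2,p-1\}$ term captures the minimum multiplicity with which a nonzero wild break can occur, arising from the Galois-orbit structure of the fundamental characters of tame inertia. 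The main obstacle is to extract the sharp common saving from these break-theoretic and structural constraints and to verify that $C_p(A,B)$ is genuinely the best invariant obtainable from the data $\dim(\rho_A),\deg(A),\dim(\rho_B),\deg(B),p$; I expect this to be packaged as a general tensor-product lemma for Weil--Deligne representations satisfying the relevant monodromy constraints, then specialised to abelian varieties via Grothendieck's theorem.
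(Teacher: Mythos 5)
Your reductions are sound: substituting $a(\rho)=\Sw(\rho)+\dim(\rho)-\deg(\rho)$ does convert the claimed bound into the Swan inequality you display, and your treatment of the equality case is correct and essentially complete. It is in fact a slightly different route from the paper's, which instead computes the semistable case explicitly from $\Res_{G_0}\rho_A\cong\triv^m\oplus\sp(2)^n$ together with the conductor formula for $\sigma\otimes\sp(n)\otimes\sp(m)$; your observation that $\Sw(\rho_A)=0$ makes wild inertia see only the $\rho_B$-factor is cleaner. Two small caveats there: you should justify that $a(\rho_A)\leq 1$ forces $\Sw(\rho_A)=0$ (easy, since $\Sw(\rho_A)$ and $\dim\rho_A-\deg(A)$ are nonnegative integers and the first being positive forces the second to be positive as well), and, since the stated inequality must also hold when $a(\rho_A)\leq 1$, you still owe the separate check that $C_p(A,B)\min\{a(\rho_A),a(\rho_B)\}\leq(\dim\rho_A-\deg(A))(\dim\rho_B-\deg(B))$ in that case, which the paper carries out explicitly.

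The genuine gap is the general upper bound, which is the entire content of the theorem and which you explicitly leave as ``the main obstacle''. What is needed is precisely
\[\Sw(\rho_A\otimes\rho_B)\leq\dim(\rho_A)\Sw(\rho_B)+\dim(\rho_B)\Sw(\rho_A)-\max\{2,p-1\}\min\{\Sw(\rho_A),\Sw(\rho_B)\},\]
after which the product $(\dim\rho_A-\deg(A))(\dim\rho_B-\deg(B))$ supplies the remaining two inputs to $C_p(A,B)$ via $\min\{a(\rho_A),a(\rho_B)\}\leq\min\{\Sw(\rho_A),\Sw(\rho_B)\}+\max\{\dim\rho_A-\deg(A),\,\dim\rho_B-\deg(B)\}$. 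Your proposed mechanism for this saving is both unsubstantiated and mislocated: the factor $\max\{2,p-1\}$ has nothing to do with fundamental characters of tame inertia, and generic break-theoretic considerations of the type you invoke only recover the Bushnell--Henniart saving of $1\cdot\min\{a(\rho_A),a(\rho_B)\}$. In the paper the improved constant comes from two structural facts about the restriction of $\rho_A$ to the wild inertia group $G_1$, which acts through a finite $p$-group: the representation is symplectic, so each jump $\dim\rho_A-\dim\rho_A^{G_i}$ ($i\geq 1$) is even; and its elements have rational characteristic polynomials, so each nonzero jump is at least $p-1$ (a nontrivial character of $p$-power order has at least $p-1$ Galois conjugates, all of which must occur). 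An elementary lemma on products of decreasing sequences with values in $\{0\}\cup\RR_{\geq M}$ then converts these jump-wise lower bounds into the factor $M=\max\{2,p-1\}$ in front of $\min\{\Sw(\rho_A),\Sw(\rho_B)\}$. Without an argument of this kind, or a correct substitute, the inequality is not proved.
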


We obtain this by splitting the conductor exponent into a tame part and a wild part (i.e. the Swan conductor).  For this latter piece, we obtain the following bound.

\begin{theorem}[\Cref{thm:swancond of tensor of abvars}]\label{INTRO:thm:swancond of tensor of abvars}
    Let $A$ and $B$ be abelian varieties over a non-Archimedean local field $F$ of residue characteristic $p$. Then
\[\Sw(\rho_A \otimes \rho_B)\leq \dim(\rho_A)\Sw(\rho_B) + \dim(\rho_B)\Sw(\rho_A) - \max\set{2, p-1}\min \set{\Sw(\rho_A), \Sw(\rho_B)}.\]
\end{theorem}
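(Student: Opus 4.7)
My plan is to reduce the bound to a claim about the multiplicities of breaks under the upper-numbering ramification filtration. Decompose $\rho_A = \bigoplus_{\lambda\geq 0} \rho_A(\lambda)$ and $\rho_B = \bigoplus_{\mu\geq 0} \rho_B(\mu)$ into pure-break components, so that $\Sw(\rho_A) = \sum_\lambda \lambda\dim\rho_A(\lambda)$ and analogously for $\rho_B$. Since tensoring two pure-break representations with distinct breaks produces a pure-break representation of the maximum break, one has the uniform estimate $\Sw(\rho_A(\lambda)\otimes\rho_B(\mu)) \leq \max(\lambda,\mu)\dim\rho_A(\lambda)\dim\rho_B(\mu)$. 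Summing over $(\lambda,\mu)$ and rewriting $\max(\lambda,\mu) = \lambda + \mu - \min(\lambda,\mu)$ reduces the target to
\[\sum_{\lambda,\mu>0}\min(\lambda,\mu)\dim\rho_A(\lambda)\dim\rho_B(\mu) \;\geq\; \max\set{2,p-1}\min\set{\Sw(\rho_A),\Sw(\rho_B)}.\]

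The key structural input, which I expect to be the main obstacle, is the multiplicity bound: for every break $\lambda>0$ with $\rho_A(\lambda)\neq 0$ (and analogously for $\rho_B$),
\[\dim\rho_A(\lambda) \;\geq\; \max\set{2,p-1}.\]
The lower bound of $2$, which suffices for $p\in\set{2,3}$, should be forced by the Weil pairing: the self-duality $\rho_A \cong \rho_A^\vee(1)$, combined with the fact that $\Q_\ell(1)$ is unramified on wild inertia (pro-$p$ while $\ell\neq p$), implies that distinct break components are mutually orthogonal. Hence the Weil pairing restricts to a non-degenerate alternating form on each $\rho_A(\lambda)$, forcing even dimension. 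The residual divisibility by $p-1$ for $p\geq 5$ is more delicate; here I would analyse the action of wild inertia (whose image is a finite $p$-group) on $V_\ell A$ together with the Galois-orbit structure of the appearing characters, expecting the required divisibility to emerge from the rationality constraints specific to Tate modules of abelian varieties, perhaps after reducing to the semistable case via Grothendieck's theorem and exploiting properties of the connected N\'eron model.

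Granting the multiplicity bound, the combinatorial conclusion is routine. Writing $c = \max\set{2,p-1}$ and assuming without loss of generality $\Sw(\rho_A) \leq \Sw(\rho_B)$, I would verify that for every break $\lambda$ of $\rho_A$, $\sum_\mu \min(\lambda,\mu)\dim\rho_B(\mu) \geq c\lambda$ by a dichotomy. If $\rho_B$ admits some break $\mu \geq \lambda$, that single term alone contributes $\lambda\dim\rho_B(\mu) \geq c\lambda$. Otherwise all breaks of $\rho_B$ lie strictly below $\lambda$, in which case $\sum_\mu \min(\lambda,\mu)\dim\rho_B(\mu) = \Sw(\rho_B) \geq \Sw(\rho_A) \geq \lambda\dim\rho_A(\lambda) \geq c\lambda$, again via the multiplicity bound, now on $\rho_A$. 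Weighting by $\dim\rho_A(\lambda)$ and summing over breaks of $\rho_A$ produces the desired inequality and completes the proof.
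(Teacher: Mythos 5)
Your overall architecture is sound, and it is essentially a repackaging of the paper's argument: the upper-numbering break decomposition plays the role of the lower-numbering filtration quantities $\abs{\tau}-\abs{\tau^{G_i}}$, your dichotomy replaces the gluing step (\Cref{lemseq}), and the two sources of the constant $\max\{2,p-1\}$ --- symplecticity for the $2$, rationality over a $p$-group for the $p-1$ --- are exactly the ones used in the paper. Your even-dimensionality argument for each positive-break component (orthogonality of distinct break components under the Weil pairing, since the cyclotomic twist is trivial on wild inertia) is correct and in fact slightly finer than the paper's \Cref{lemsymp}, and the closing combinatorial reduction is valid.

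The genuine gap is the multiplicity bound $\dim\rho_A(\lambda)\geq p-1$ for each break $\lambda>0$, which you correctly identify as the crux for $p\geq 5$ but leave as an expectation rather than a proof; moreover the route you gesture at (Grothendieck's semistable reduction theorem, connected N\'eron models) points away from what is actually needed, which is short and purely group-theoretic. The required inputs are: (i) for every inertia element $g$ the characteristic polynomial of $\rho_A(g)$ is rational (\Cref{prop:abvarrep}); (ii) Galois conjugation of coefficients preserves breaks, so each $\rho_A(\lambda)$ is itself stable under conjugation and inherits the rationality property as a representation of the finite $p$-group image of wild inertia; and (iii) the representation-theoretic fact (\Cref{lem:pgrouprationalreps}) that a nontrivial irreducible of a finite $p$-group either has dimension a positive power of $p$, hence at least $p$, or is a character of $p$-power order whose Galois orbit has size at least $p-1$ and must occur in full by rationality. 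Since $\rho_A(\lambda)$ has no wild-inertia invariants when $\lambda>0$, this yields $\dim\rho_A(\lambda)\geq p-1$. Without step (iii) or a substitute for it, the theorem is not established for $p\geq 5$.
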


\begin{rem}
The inequalities of \Cref{INTRO:thm:cond of tensor of abvars,INTRO:thm:swancond of tensor of abvars} are trivially sharp: if $A$ has good reduction, then equality holds as $a(\rho_A \otimes \rho_B)=\dim(\rho_A)a(\rho_B)$ with the remaining terms vanishing (and similarly for $\Sw$). A family of nontrivial examples of sharpness, one for each odd prime $p$, is given in \Cref{subsec:sharpness}.
\end{rem}

A corollary of our work above is the following global result, which uses a simplified version of the above bound, and answers a question raised to us by Andrew Booker.

\begin{theorem}[\Cref{cor:Cond of TP}]\label{INTRO:thm:Cond of TP}
    Let $A,B$ be abelian varieties over a global field $K$ of conductors $\fN_A$ and $\fN_B$. Define
    \[\fd(A,B):=\prod_{\substack{\fp\textnormal{ prime}\\v_\fp(\fN_A)v_\fp(\fN_B)>1}}\fp^{\deg_\fp(A\boxtimes B)-\deg_\fp(A)\deg_\fp(B)}.\]
    Then 
    \[N(A\boxtimes B)\hspace{2pt}\Bigg\vert\hspace{2pt} \frac{\fN_A^{2\dim B}\fN_B^{2\dim A}}{\fd(A,B)\gcd(\fN_A,\fN_B)^2},\]
    where $N(A\boxtimes B)$ is the conductor of the Rankin--Selberg $L$-function $L(A\boxtimes B, s)$, and $\deg_\fp$ is the degree of the corresponding $L$-factor.
\end{theorem}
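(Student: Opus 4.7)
By the local Langlands correspondence for $\GL_n \times \GL_m$, the conductor of $L(A\boxtimes B, s)$ factors as $\prod_\fp \fp^{a(\rho_{A, \fp} \otimes \rho_{B, \fp})}$, where $\rho_{A, \fp}$ denotes the local $\ell$-adic representation. I would therefore prove the divisibility prime-by-prime, reducing to a purely local statement. Fix a finite prime $\fp$ of $K$ and write $a_A := v_\fp(\fN_A)$, $a_B := v_\fp(\fN_B)$, and $\Delta := \deg_\fp(A\boxtimes B) - \deg_\fp(A)\deg_\fp(B)$. The goal becomes
\[a(\rho_A\otimes\rho_B)\leq 2\dim B\cdot a_A + 2\dim A\cdot a_B - \epsilon_\fp \Delta - 2\min(a_A, a_B),\]
where $\epsilon_\fp = 1$ if $a_A a_B > 1$ and $\epsilon_\fp = 0$ otherwise.

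I would split into three cases according to $(a_A, a_B)$. If $\min(a_A, a_B) = 0$, say $a_A = 0$, then $\rho_A$ is unramified, which forces $(\rho_A\otimes\rho_B)^I = \rho_A\otimes\rho_B^I$ and in particular $\Delta = 0$, so the bound collapses to the identity $a(\rho_A\otimes\rho_B) = \dim(\rho_A)\cdot a_B$. If $a_A = a_B = 1$ (both reductions being semistable of multiplicative type), then the $\fd$-factor is absent and I would invoke the ``moreover'' clause of \Cref{INTRO:thm:cond of tensor of abvars} for an exact formula. Combining the two symmetric forms of this equality reduces the desired bound to the inequality $\Delta \geq 1$, which is a direct computation on the tensor monodromy $N_A \otimes 1 + 1\otimes N_B$: since both $N_A$ and $N_B$ have rank one, one can exhibit an explicit element of $\ker(N_A\otimes 1+1\otimes N_B)\cap (\rho_A\otimes\rho_B)^I$ lying outside $\ker N_A \otimes \ker N_B$.

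The main case is $\max(a_A, a_B) \geq 2$, where I apply the main inequality of \Cref{INTRO:thm:cond of tensor of abvars}. The substantive point is verifying $C_p(A, B) \geq 2$: since $\max\{2, p-1\} \geq 2$ always, this reduces to $\abs{\rho_A} - \deg(A) \geq 2$ and $\abs{\rho_B} - \deg(B) \geq 2$. I expect this to be the principal obstacle, as the quantity $\abs{\rho} - \deg$ is the codimension of the inertia invariants, equivalently the tame portion $a(\rho) - \Sw(\rho)$, which can in principle fall below $2$ in sufficiently wild situations. In such a regime I would bypass the packaged form of \Cref{INTRO:thm:cond of tensor of abvars} and instead estimate the tame and Swan conductors of $\rho_A\otimes\rho_B$ separately: the elementary inclusion $(\rho_A\otimes\rho_B)^I \supseteq \rho_A^I \otimes \rho_B^I$ handles the tame part, and \Cref{INTRO:thm:swancond of tensor of abvars} controls the Swan part, its $\max\{2, p-1\}$ factor supplying the compensating slack exactly where the tame contribution is lost. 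Multiplying the local bounds then yields the claimed divisibility.
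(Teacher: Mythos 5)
Your global-to-local reduction and overall case division follow the same route as the paper: the statement is exactly the prime-by-prime product of \Cref{cor:simplified conductor exp bound}, and that corollary is proved by the dichotomy you describe (apply \Cref{INTRO:thm:cond of tensor of abvars} when $C_p(A,B)\geq 2$, fall back on the exact semistable formula otherwise). Your cases $\min(a_A,a_B)=0$ and $a_A=a_B=1$ are handled correctly; in particular the reduction of the latter to $\Delta\geq 1$, and the verification of $\Delta\geq 1$ via the extra invariant line in $\sp(2)\otimes\sp(2)$, is sound and is genuinely needed.

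The gap is in your main case, and specifically in your diagnosis of when $C_p(A,B)<2$. You place the failure in ``sufficiently wild situations'', but it is exactly the opposite: by \Cref{prop:abvarrep} and \Cref{spcond}, the symplectic summand contributes an even amount to $\abs{\rho_A}-\deg(A)$, so $\abs{\rho_A}-\deg(A)\leq 1$ forces $a(\rho_A)\leq 1$, i.e.\ $A$ is semistable with at most one toric line --- the tamest possible bad reduction --- and in particular $\Sw(\rho_A)=0$. Hence in the problematic sub-case $a_A=1$, $a_B\geq 2$ (which sits inside your main case), your proposed rescue fails: $\min\set{\Sw(\rho_A),\Sw(\rho_B)}=0$, so \Cref{INTRO:thm:swancond of tensor of abvars} supplies no compensating slack whatsoever. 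Moreover, bounding the tame part via the inclusion $(\rho_A\otimes\rho_B)^{G_0}\supseteq\rho_A^{G_0}\otimes\rho_B^{G_0}$ discards precisely the term $\deg(A\boxtimes B)-\deg(A)\deg(B)$ that you must retain to produce $\fd(A,B)$ (here $a_Aa_B>1$, so that factor is present); you need the identity of \Cref{lem:tensorprod tame part}, not the one-sided inequality. The repair is the tool you already deploy for $a_A=a_B=1$: since $a(\rho_A)\leq 1$ forces semistability, \Cref{thm:semistable case} gives the exact value $a(\rho_A\otimes\rho_B)=\abs{\rho_A}a(\rho_B)+\deg(B)a(\rho_A)-\Delta$, and the required saving of $2\min(a_A,a_B)=2$ comes from $\abs{\rho_B}-\deg(B)\geq 2$, which holds because $a(\rho_B)\geq 2$. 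With that sub-case rerouted, the rest of your plan goes through.
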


The machinery we have developed here requires little arithmetic input.  Hence, most of our results are shown on a purely group-theoretic level in \Cref{sec:group conductors}, in order to make them more flexible for application and in case they are of independent interest.

\subsection*{Comparison to current literature}
There has been significant interest in understanding conductor exponents of tensor products of Weil--Deligne representations.  In the setting of our results, there is the (sharp) bound of Bushnell--Henniart, which we recall below.
\begin{theorem}[\cite{MR4404782}*{Theorem B.2}]%
Let $A$ and $B$ be abelian varieties over a non-Archimedean local field $F$, with associated Weil--Deligne representations $\rho_A,\rho_B$.  Then
\[a(\rho_A \otimes \rho_B) \leq \dim(\rho_A)a(\rho_B) + \dim(\rho_B)a(\rho_A) - 2\min\set{a(\rho_A), a(\rho_B)},\]
where $a(\cdot)$ is the Artin conductor exponent.
\end{theorem}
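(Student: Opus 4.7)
The plan is to split the Artin conductor exponent into tame and wild parts and bound each separately. For a Weil--Deligne representation $(V,\rho,N)$ one has
\[
a(V,\rho,N) = \dim V - \dim(V^{I_F}\cap\ker N) + \Sw(\rho),
\]
so the task reduces to producing a lower bound for $\dim((V_A\otimes V_B)^{I_F}\cap\ker N_{AB})$ and an upper bound for $\Sw(\rho_A\otimes\rho_B)$, where $N_{AB} = N_A\otimes\mathrm{id} + \mathrm{id}\otimes N_B$.

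For the Swan piece I would apply the unconditional classical inequality
\[
\Sw(\rho_A\otimes\rho_B) \leq \dim(\rho_A)\Sw(\rho_B) + \dim(\rho_B)\Sw(\rho_A),
\]
which follows from a direct analysis of the upper-numbering ramification filtration and requires no special structure on $\rho_A,\rho_B$. This already accounts for the ``naive'' piece of the bound; the saving of $2\min\{a(\rho_A),a(\rho_B)\}$ must therefore come entirely from the tame part.

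The key input for the tame saving is the weight-monodromy structure for abelian varieties: by Grothendieck's monodromy theorem together with weight considerations (the $\ell$-adic cohomology of an abelian variety has weights in $\{0,1,2\}$), one has $N_X^2 = 0$ on each $V_X$, and each $\rho_X$ is symplectically self-dual up to a Tate twist. After passing to a finite Galois extension $F'/F$ over which $A$ and $B$ both acquire semistable reduction, each $V_X$ decomposes as an orthogonal direct sum of unramified lines together with $2$-dimensional ``Steinberg-like'' blocks paired by self-duality. The core computation is that on a tensor product of two such $2$-dimensional blocks, $\ker N_{AB}$ picks up an extra antisymmetric element (e.g.\ $e_1\otimes f_2 - e_2\otimes f_1$) beyond the obvious kernel vector $e_1\otimes f_1$; aggregating these extra vectors over all cross-terms produces the desired lower bound on $(V_A\otimes V_B)^{I_F}\cap\ker N_{AB}$.

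\textbf{Main obstacle.} The delicate step is the descent from $F'$ to $F$: one must verify that sufficiently many of the antisymmetric vectors constructed above survive as elements of $(V_A\otimes V_B)^{I_F}$, not merely $(V_A\otimes V_B)^{I_{F'}}$. I would handle this via a restriction-induction argument combined with the self-duality of each $V_X$, ensuring that the $\Gal(F'/F)$-action on the Steinberg blocks is compatible with the antisymmetric construction. The factor $2$ in the bound is precisely the dimension of each Steinberg-like block, so the saving counts twice the number of such blocks contributing to $\min\{a(\rho_A),a(\rho_B)\}$.
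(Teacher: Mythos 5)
Your overall strategy --- split $a$ into tame plus Swan, take the naive bound for the Swan part, and extract the entire saving of $2\min\{a(\rho_A),a(\rho_B)\}$ from the tame part --- cannot work, and this is the central gap. The tame part of $a(\rho_A\otimes\rho_B)$ is at most $\dim(\rho_A)\dim(\rho_B)$, so the total saving available from the tame part is bounded above by $\dim(\rho_A)\bigl(\dim\rho_B-\deg(B)\bigr)+\dim(\rho_B)\bigl(\dim\rho_A-\deg(A)\bigr)\leq 2\dim(\rho_A)\dim(\rho_B)$, a quantity depending only on the dimensions. But $2\min\{a(\rho_A),a(\rho_B)\}\geq 2\min\{\Sw(\rho_A),\Sw(\rho_B)\}$ can be arbitrarily large for fixed dimensions (e.g.\ elliptic curves over highly ramified extensions of $\QQ_2$ both have $\dim\rho=2$ but unbounded Swan conductor). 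So when both varieties are wildly ramified you must save $2\min\{\Sw(\rho_A),\Sw(\rho_B)\}$ inside the Swan term itself; the unconditional inequality $\Sw(\rho_A\otimes\rho_B)\leq\dim(\rho_A)\Sw(\rho_B)+\dim(\rho_B)\Sw(\rho_A)$ is not enough. The mechanism for that saving is the symplectic structure: for a symplectic representation $\tau$ of a finite group $G$ the defect $\dim\tau-\dim\tau^{G}$ is even, so each term $\bigl(\dim\tau_1-\dim\tau_1^{G_i}\bigr)\bigl(\dim\tau_2-\dim\tau_2^{G_i}\bigr)$ appearing in the cross term of the ramification-filtration sum is either $0$ or at least $2\max_j\bigl(\dim\tau_j-\dim\tau_j^{G_i}\bigr)$, and summing over the filtration yields the factor $2\min\{\cdot,\cdot\}$. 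This is how the paper (and Bushnell--Henniart) obtain the constant $2$; it applies simultaneously to the tame and wild layers, not to the tame layer alone.

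A secondary issue: even your tame analysis, executed correctly, would not produce the claimed saving. The ``extra antisymmetric kernel vectors'' you find in tensor products of Steinberg blocks (from $\sp(2)\otimes\sp(2)\cong\sp(3)\oplus\sp(1)$ up to twist) account for the nonnegative degree defect $\deg(A\boxtimes B)-\deg(A)\deg(B)$ --- this is precisely the additional term $\delta$ that the present paper subtracts on top of the Bushnell--Henniart bound --- and not for $2\min\{a(\rho_A),a(\rho_B)\}$. The factor-$2$ saving in the tame layer again comes from the evenness of $\dim V-\dim V^{I_F}$ for the symplectic pieces, i.e.\ from the product of the non-invariant dimensions, not from extra monodromy-kernel vectors. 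Finally, note that the quoted statement is Bushnell--Henniart's theorem, cited in the paper for comparison; the paper does not prove it directly but recovers and sharpens it (Corollary~\ref{cor:simplified conductor exp bound}) via the group-filtration lemmas \Cref{lemRGi}, \Cref{lemseq} and \Cref{lemsymp} applied to both the inertia and wild-inertia filtrations.
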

\Cref{INTRO:thm:swancond of tensor of abvars} is an improvement on this result.  Our techniques indicate that the factor of $2$ multiplying the minimum arises essentially because the representations are (essentially) symplectic.  Meanwhile our factor of $p-1$ arises as a consequence of Weil's theorem on the rationality of the characteristic polynomials of these representations.

Our methods and results are more general than the setting presented above:  we prove analogous statements for (local) Weil--Deligne representations satisfying certain technical hypotheses.  Throughout, by a (local) Weil--Deligne representation, we implicitly mean a complex Frobenius-semisimple one (equivalently an admissible representation for the Langlands correspondence).  In this general setting, Bushnell--Henniart \cite{BushHen2}*{Theorem C} prove that for two general Weil--Deligne representations $\rho_1,\rho_2$ one only saves one factor of the minimum:
\[a(\rho_1 \otimes \rho_2) \leq \dim(\rho_1)a(\rho_2) + \dim(\rho_2)a(\rho_1)- \min\set{a(\rho_1), a(\rho_2)}.\]

Our approach suggests that if one imposes restrictions on the corresponding Weil--Deligne representations then a better constant, in a similar vein to the results on abelian varieties, in front of the minimum term should be possible. 
Indeed, if $\rho_j = \oplus_{n \geq 1}\ \sigma_n^{(j)} \otimes \sp(n)$ for $j=1,2$, then we expect said constant in the general case to depend on the smallest integer $n$ for which some $\sigma_{n}^{(j)}\neq 0$.  If, moreover, one knows additional representation theoretic information about the $\sigma_n$ (e.g. essentially symplectic, or rationality properties), then such bounds should be able to be improved further.

\subsection*{Acknowledgements} We thank Andrew Booker for bringing the questions addressed in this article to our attention and for valuable comments, and Besfort Shala for helpful conversations.  We are grateful to Guy Henniart for bringing the work of Bushnell--Henniart in \cite{MR4404782}*{Appendix B} to our attention. 

\newpage
\subsection*{Notation} For each real number $M$, we denote by $\RR_{\geq M}$ the set of real numbers greater than or equal to $M$.  For any subset $S\subset \RR$ we write $S_{\geq M}:=S\cap \RR_{\geq M}$.  Below we record some common notation found in this article.

\vspace{-5pt}
\subsubsection*{Finite Group Representations}  In \Cref{sec:group conductors} we work with arbitrary finite groups.  For every finite group $G$ and finite dimensional $\CC$-representations $\tau,\sigma$ of $G$, we denote by:

\begin{tabular}{cl}
$\abs{\tau}$ & the dimension of $\tau$; \\
$\tau^*$& the contragredient representation of $\tau$; \\
$\gp{\tau,\sigma}_{G}$& the usual inner product of $\tau$ and $\sigma$ as representations of $G$;\\
$\triv$ & the trivial representation of $G$;\\
$\tau^G$ & the $G$-fixed elements under $\tau$.
\end{tabular}

\subsubsection*{Local Arithmetic} In the local arithmetic setting (\Cref{subsec:WD reps background}--\ref{subsec:WD reps conductor exponents for tps}), typically we use the following notation:

\begin{tabular}{cl}
$F$ & a non-Archimedean local field;\\
$\tau,\sigma$ & Weil representations over $F$;\\
$\rho$ & a Weil--Deligne representation over $F$;\\
$A/F$ & an abelian variety;\\
$\bar{F}$& a (fixed once and for all) algebraic closure of $F$;\\
$F^{\nr}$ & $\subseteq \bar{F}$ the maximal unramified extension of $F$\\
$G_F$&the absolute Galois group $\gal(\bar{F}/F)$;\\
$a(\rho)$ & the Artin conductor exponent;\\
$\Sw(\rho)$ & the Swan conductor exponent; \\
$\deg(\rho)$ & the degree of the $L$-polynomial of $\rho$;\\
$\sp(n)$ & the special representation of dimension $n$;\\
$\rho_A$&the associated Weil--Deligne representation of $A$.
\end{tabular}

\subsubsection*{Global Arithmetic}  In \Cref{subsec:global results} we work over a global field $K$, and use the following notation:

\begin{tabular}{cl}
$\finplaces_K$ & the set of finite places of $K$;\\
$\fp$&$\in \finplaces_K$ a prime of $K$;\\
$v_\fp$&the normalised valuation at $\fp$ on $K$;\\
$A,B$ & abelian varieties over $K$;\\
$\deg_\fp(A),\, \deg_{\fp}(A\boxtimes B)$ & the degrees of the corresponding $L$-factors at $\fp$.
\end{tabular}

\vspace*{5pt}


\section{Conductors for group filtrations}\label{sec:group conductors}
Here we abstract the notion of the Artin conductor exponent of a representation to being a number associated to a sequence of subgroups and a representation.  We then prove statements about this general object which require no arithmetic input.  These results will find arithmetic applications in \Cref{sec:arithmetic}.
\subsection{Definitions}
We begin by introducing the central objects for this section.
\begin{definition}
    Let $\cG=(G_i)_{i\geq 0}$ be a sequence of finite groups such that for all $i\geq 0$
    \begin{itemize}
        \item $G_{i+1}\leq G_{i}$;
        \item $G_k=0$ for some $k\geq 0$.
    \end{itemize}
    We call such $\cG$ a finite group filtration.
\end{definition}
\begin{notation}
    For a finite group filtration $\cG=(G_i)_{i\geq 0}$, and finite dimensional complex representations $\tau,\sigma$ of $G_0$, we define the following notation.  For $i\geq 0$:
    \begin{align*}
        \epsg_i(\tau)&=\abs{\tau}-\abs{\tau^{G_i}};
        &\delg_i(\tau,\sigma)&=\abs{\braces{\tau\otimes\sigma}^{G_i}}-\abs{\tau^{G_i}\otimes\sigma^{G_i}}.
    \end{align*}

\end{notation}
\begin{definition}  Let $\cG=(G_i)_{i\geq 0}$ be a finite group filtration, and $\tau,\sigma$ be representations of $G_0$.  The conductor exponent $\aG(\tau)$ and the function $\delg(\tau,\sigma)$ are defined by:
    \begin{align*}
        \aG(\tau)&=\sum_{i\geq 0}\frac{\epsg_i(\tau)}{[G_0:G_i]};&
        \delg(\tau,\sigma)&=\sum_{i\geq 0}\frac{\delg_i(\tau,\sigma)}{[G_0:G_i]}.
    \end{align*}
\end{definition}
\begin{rem}
    Note that $\epsg_i(\tau)$ and $\delg_i(\tau,\sigma)$ are decreasing sequences in $i$.
\end{rem}
In what follows we will prove results about $\aG$.  Our arithmetic motivations are to study two particular cases, which we outline briefly below.  
\begin{example}[Artin conductor]\label{example:artinconductor}
    Let $F$ be a non-Archimedean local field, and $\tau,\sigma$ be Weil representations over $F$.  Let $L/F^{\nr}$ be the finite extension through which $\tau$ and $\sigma$ factor, and $G_i\leq \gal(L/F^{\nr})$ be the $i$th ramification group in the lower numbering.  Then we have an equality
    \[\aG(\tau)=a(\tau),\]
    where the right hand side is the usual Artin conductor exponent, with the finite group filtration given by $\cG=\braces{G_{i}}_{i\geq 0}$.  Moreover, we have
    \[\delg(\tau,\sigma)\geq \delg_0(\tau,\sigma)=\deg(\tau\otimes\sigma)-\deg(\tau)\deg(\sigma)\geq 0,\]
    which measures nontrivial growth in the degree under tensor product.
\end{example}

\begin{example}[Swan conductor]\label{example:swanconductor}
    Let $F$ be a non-Archimedean local field, and $\tau$ be a Weil representation over $F$.  Let $L/F^{\nr}$ be the finite extension through which $\tau$ factors, and $G_i\leq \gal(L/F^{\nr})$ be the $i$th ramification group in the lower numbering.  Consider the finite group filtration given by $\cG=\braces{G_{i+1}}_{i\geq 0}$.  Then we have an equality
    \[\aG(\tau)=\Sw(\tau),\]
    where the right hand side is the usual Swan conductor exponent.
\end{example}

\subsection{Conductor bounds for tensor products}
We now consider tensor products of representations with some additional structure.  In one case, we will consider symplectic representations, in the other we consider representations of $p$-groups with rationality properties.  To begin with we will need the following.

\begin{lemma}
\label{lemRGi}
Let $\cG=(G_i)_{i\geq 0}$ be a finite group filtration, and let $\tau_1,\tau_2$ be representations of $G_0$.  Then for all $i\geq 0$,
\[\abs{\tau_1}\epsg_i(\tau_2)+\abs{\tau_2}\epsg_i(\tau_1)-\epsg_i(\tau_1\otimes\tau_2) = \epsg_i(\tau_1)\epsg_i(\tau_2)+\delg_i(\tau_1,\tau_2) .\]
\end{lemma}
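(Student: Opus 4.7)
The plan is purely computational: unwind both sides using the definitions of $\epsg_i$ and $\delg_i$, together with the multiplicativity $|\tau_1\otimes\tau_2|=|\tau_1|\cdot|\tau_2|$ and $|\tau_1^{G_i}\otimes\tau_2^{G_i}|=|\tau_1^{G_i}|\cdot|\tau_2^{G_i}|$, and show that the two sides coincide.

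First I would rewrite the left-hand side. By definition,
\[
\epsg_i(\tau_j)=|\tau_j|-|\tau_j^{G_i}|\qquad(j=1,2),\qquad \epsg_i(\tau_1\otimes\tau_2)=|\tau_1|\cdot|\tau_2|-\bigl|(\tau_1\otimes\tau_2)^{G_i}\bigr|.
\]
Expanding
\[
|\tau_1|\epsg_i(\tau_2)+|\tau_2|\epsg_i(\tau_1)-\epsg_i(\tau_1\otimes\tau_2),
\]
the $|\tau_1|\cdot|\tau_2|$ terms partially cancel, leaving
\[
|\tau_1|\cdot|\tau_2|-|\tau_1|\cdot|\tau_2^{G_i}|-|\tau_2|\cdot|\tau_1^{G_i}|+\bigl|(\tau_1\otimes\tau_2)^{G_i}\bigr|.
\]

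Next I would expand the right-hand side. Using the definitions,
\[
\epsg_i(\tau_1)\epsg_i(\tau_2)=\bigl(|\tau_1|-|\tau_1^{G_i}|\bigr)\bigl(|\tau_2|-|\tau_2^{G_i}|\bigr)
\]
and
\[
\delg_i(\tau_1,\tau_2)=\bigl|(\tau_1\otimes\tau_2)^{G_i}\bigr|-|\tau_1^{G_i}|\cdot|\tau_2^{G_i}|.
\]
Summing, the $|\tau_1^{G_i}|\cdot|\tau_2^{G_i}|$ contributions cancel and one obtains exactly the same expression as for the left-hand side.

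There is no real obstacle here: the identity holds as a formal consequence of the definitions and the fact that dimension is multiplicative under tensor product, applied both to the full representations and to their $G_i$-invariants. One should simply present the two expansions and observe that they agree.
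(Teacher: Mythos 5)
Your proposal is correct and is essentially identical to the paper's proof: both expand each side via the definitions of $\epsg_i$ and $\delg_i$, use multiplicativity of dimension under tensor product, and observe that the two expansions agree. Nothing is missing.
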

\begin{proof}
Since $\epsg_i(\tau_j)=\abs{\tau_j}-\abs{\tau_j^{G_i}}$, clearly the right hand side of the claimed equality is
\[\epsg_i(\tau_1)\epsg_i(\tau_2)=\abs{\tau_1} \abs{\tau_2} - \abs{\tau_1^{G_i}}\abs{\tau_2} - \abs{\tau_2^{G_i}}\abs{\tau_1} + \abs{\braces{\tau_1\otimes\tau_2}^{G_i}}.\] 
On the other hand, again unwinding the definition of the $\epsg_i$, the left hand side of the claimed equality is also
\begin{align*}
\abs{\tau_1}\epsg_i(\tau_2)+\abs{\tau_2}\epsg_i(\tau_1)-\epsg_i(\tau_1\otimes\tau_2)&=\abs{\tau_1}\abs{\tau_2} - \abs{\tau_1^{G_i}}\abs{\tau_2} - \abs{\tau_2^{G_i}}\abs{\tau_1} + \abs{\braces{\tau_1\otimes\tau_2}^{G_i}}.
\end{align*}
\end{proof}

We will also make use of an elementary lemma to glue together the various steps in the ramification filtration.
\begin{lemma}
\label{lemseq}
    Let $M$ be a positive real number, and $(a_i)_{i=1}^n,(b_i)_{i=1}^n\subseteq \RR_{\geq M}\cup \set{0}$ be two decreasing sequences. Let $d_1,d_2,...,d_n$ be positive real numbers. Then 
    \[\sum_{i=1}^n \frac{a_ib_i}{d_i} \geq M \min \left( \sum_{i=1}^n \frac{a_i}{d_i}, \,\, \sum_{i=1}^n \frac{b_i}{d_i} \right).\]
\end{lemma}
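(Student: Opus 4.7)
The plan is to exploit the strong structural constraint on the sequences: each $a_i$ and each $b_i$ lies in $\{0\}\cup[M,\infty)$ and the sequences are decreasing. Consequently each sequence has a ``cliff'': there is a largest index $k_a$ with $a_{k_a}\geq M$ (and $0$ otherwise), and similarly $k_b$, beyond which the sequence vanishes identically. So the sum $\sum a_i b_i/d_i$ is really only over $i\leq\min(k_a,k_b)$, and on that range both factors are at least $M$.

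Without loss of generality I would assume $k_a\leq k_b$, so that $\min(k_a,k_b)=k_a$. The first observation is that for $i>k_a$ we have $a_i=0$, so these indices contribute nothing to $\sum_i a_i b_i/d_i$ and nothing to $\sum_i a_i/d_i$. The second observation is that for $i\leq k_a$ we also have $i\leq k_b$, so $b_i\geq M$, which gives the pointwise bound $a_i b_i\geq M a_i$ (both sides nonnegative). Summing against the positive weights $1/d_i$ yields
\[
\sum_{i=1}^n \frac{a_ib_i}{d_i} \;=\; \sum_{i\leq k_a}\frac{a_ib_i}{d_i}\;\geq\; M\sum_{i\leq k_a}\frac{a_i}{d_i}\;=\;M\sum_{i=1}^n\frac{a_i}{d_i}\;\geq\; M\min\!\left(\sum_i \frac{a_i}{d_i},\ \sum_i \frac{b_i}{d_i}\right),
\]
which is the desired inequality.

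There is really no hard step here; the only thing worth noting is that the gap between $0$ and $M$ in the allowed value set, combined with monotonicity, is exactly what makes the argument work. Without the gap, we would only get the trivial bound $a_i b_i \geq 0 \cdot \min(a_i,b_i)$; with it, whenever both factors are nonzero they both exceed $M$, so we get the clean pointwise comparison $a_i b_i \geq M a_i$ on the overlap, and vanishing outside the overlap ensures the inequality extends to the full sums.
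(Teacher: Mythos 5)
Your proof is correct and is essentially the same argument as the paper's: both exploit the fact that wherever a term $a_ib_i$ is nonzero, both factors are at least $M$ (by monotonicity and the gap in the value set), giving the pointwise bound $a_ib_i\geq Ma_i$ on the overlap and vanishing beyond it. The paper merely organises this as two cases (all products nonzero, versus a first vanishing product), whereas you handle both uniformly via the cliff indices; the content is identical.
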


\begin{proof}
    First suppose that $a_nb_n \neq 0$ (so all $a_i$ and $b_i$ are at least $M$). Then for all $1 \leq i \leq n$, $\frac{a_ib_i}{d_i} \geq M\max\{ \frac{a_i}{d_i},\frac{b_i}{d_i} \}$ and the claim follows. 
    
    Now let $k$ be minimal such that $a_kb_k=0$ and suppose, without loss of generality, that $a_k=0$. This implies that $b_j\geq M$ for $j<k$ so 
\begin{eqnarray*}
    \sum_{i=1}^n \frac{a_ib_i}{d_i} &=& \sum_{i=1}^{k-1} \frac{a_ib_i}{d_i} \\
    &\geq& M\sum_{i=1}^{k-1} \frac{a_i}{d_i} \\
    &=& M\sum_{i=1}^{n} \frac{a_i}{d_i}.
\end{eqnarray*} 
\end{proof}
\subsubsection{Symplectic representations}
We now study tensor products of symplectic representations, the main result for which is the following.  
\begin{proposition}
\label{prop:gp filtration R ineq}
Let $\cG=(G_i)_{i\geq 0}$ be a finite group filtration, and $\tau_1, \tau_2$ be symplectic representations of $G_0$.  Then
\[\aG(\tau_1\otimes\tau_2) \leq \abs{\tau_1}\aG(\tau_2)+\abs{\tau_2}\aG(\tau_1)-2 \min\set{\aG(\tau_1), \aG(\tau_2)} - \delg(\tau_1,\tau_2).\]
\end{proposition}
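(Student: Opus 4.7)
The plan is to combine the pointwise identity of Lemma \ref{lemRGi} with the elementary bound in Lemma \ref{lemseq}. Explicitly, dividing the identity of Lemma \ref{lemRGi} by $[G_0:G_i]$ and summing over $i\geq 0$ converts it into
\[\abs{\tau_1}\aG(\tau_2)+\abs{\tau_2}\aG(\tau_1)-\aG(\tau_1\otimes\tau_2) = \sum_{i\geq 0}\frac{\epsg_i(\tau_1)\epsg_i(\tau_2)}{[G_0:G_i]} + \delg(\tau_1,\tau_2).\]
After rearrangement, the proposition therefore reduces to the single inequality
\[\sum_{i\geq 0}\frac{\epsg_i(\tau_1)\epsg_i(\tau_2)}{[G_0:G_i]}\;\geq\; 2\min\set{\aG(\tau_1),\aG(\tau_2)},\]
which I would obtain by applying Lemma \ref{lemseq} with $M=2$, $a_i=\epsg_i(\tau_1)$, $b_i=\epsg_i(\tau_2)$, and $d_i=[G_0:G_i]$; the sums are effectively finite since $G_k=0$ for some $k$.

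Two hypotheses of Lemma \ref{lemseq} then need to be verified. Monotonicity of the sequences is immediate, as $G_{i+1}\leq G_i$ forces $\tau_j^{G_i}\subseteq \tau_j^{G_{i+1}}$, hence $\epsg_i(\tau_j)\geq \epsg_{i+1}(\tau_j)$. The only nontrivial input, and the sole place that the symplectic hypothesis enters, is that $\epsg_i(\tau_j)\in \set{0}\cup\RR_{\geq 2}$ for every $i$ and $j$. I would deduce this from the sharper claim that $\dim\tau_j^H$ is \emph{even} for every subgroup $H\leq G_0$: combined with the evident evenness of $\abs{\tau_j}$, this forces $\epsg_i(\tau_j)$ to be a non-negative even integer, hence either $0$ or at least $2$.

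The main obstacle is thus verifying this parity statement. My plan is to show that the $G_0$-invariant symplectic form $\omega$ on $\tau_j$ restricts to a \emph{non-degenerate} form on $\tau_j^H$: given $v\in \tau_j^H$ with $\omega(v,u)=0$ for all $u\in \tau_j^H$, one uses the averaging projection $\pi_H(w)=\frac{1}{\abs{H}}\sum_{h\in H}h\cdot w$ together with the $G_0$-invariance of $\omega$ and the $H$-fixedness of $v$ (which give $\omega(v,h\cdot w)=\omega(v,w)$ for every $h\in H$, so $\omega(v,w)=\omega(v,\pi_H(w))=0$ for every $w\in\tau_j$) to conclude that $v$ lies in the radical of $\omega$, hence is zero. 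Once $\omega|_{\tau_j^H}$ is known to be non-degenerate, $\tau_j^H$ is a symplectic space and so is of even dimension, and the argument assembles into the desired bound.
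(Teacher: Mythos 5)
Your proposal is correct and follows essentially the same route as the paper: reduce via Lemma \ref{lemRGi} to the inequality $\sum_{i}\epsg_i(\tau_1)\epsg_i(\tau_2)/[G_0:G_i]\geq 2\min_j \aG(\tau_j)$, verify that each $\epsg_i(\tau_j)$ is a non-negative even integer by showing the symplectic form stays non-degenerate on the fixed subspace (your averaging argument is the same as the paper's Lemma \ref{lemsymp}), and conclude with Lemma \ref{lemseq} with $M=2$. No gaps.
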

First, we remind the reader of a standard result, for which we give a short proof in the interests of self-containment.
\begin{lemma}
\label{lemsymp}
    Let $\tau$ be a symplectic representation of a finite group $G$.  Then $\abs{\tau}-\abs{\tau^G}$ is even.
\end{lemma}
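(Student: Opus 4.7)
The plan is to prove separately that both $\abs{\tau}$ and $\abs{\tau^G}$ are even, from which the claim is immediate.  That $\abs{\tau}$ is even follows from the standard fact that any vector space admitting a non-degenerate alternating form has even dimension.

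For $\abs{\tau^G}$, I would aim to exhibit a non-degenerate alternating form directly on $\tau^G$, whence its dimension is forced to be even.  Write $\omega$ for the $G$-invariant symplectic form on $\tau$, and by Maschke's theorem decompose $\tau = \tau^G \oplus U$ as $G$-representations, where $U$ is taken to be the sum of the non-trivial isotypic components (so $U^G = 0$).  The natural candidate on $\tau^G$ is the restriction $\omega|_{\tau^G}$; the task is to show this restriction is non-degenerate.

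The key step is to observe that $\omega$ pairs $\tau^G$ and $U$ trivially.  Indeed, for any fixed $v \in \tau^G$, the linear functional $u \mapsto \omega(v, u)$ on $U$ is $G$-invariant (with $\CC$ carrying the trivial action), using that $v$ is fixed and $\omega$ is $G$-invariant.  Such a functional factors through the coinvariants $U_G$, which vanish by Maschke since $U^G = 0$.  Hence $\omega$ decomposes as $\omega|_{\tau^G} \oplus \omega|_U$ along the direct sum, and non-degeneracy of $\omega$ on $\tau$ forces non-degeneracy of each summand.  In particular, $\omega|_{\tau^G}$ is a non-degenerate alternating form on $\tau^G$, so $\abs{\tau^G}$ is even, completing the proof.

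The only real subtlety is the orthogonality step $\omega(\tau^G, U) = 0$; the rest is elementary linear algebra.  Should that approach stall, a backup is the character-theoretic route: decompose $\tau$ into irreducibles and invoke the fact that the trivial representation is orthogonal (Frobenius--Schur indicator $+1$), which forces its multiplicity in any symplectic representation to be even, giving $\abs{\tau^G}$ even directly.
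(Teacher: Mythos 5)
Your proof is correct and takes essentially the same route as the paper's: both reduce the claim to showing that the symplectic form restricts non-degenerately to $\tau^G$. The paper proves that step by averaging the pairing partner over $G$ (applying $\tfrac{1}{\#G}\sum_{g\in G}g$ to $b$), which is just the concrete form of your orthogonality statement $\omega(\tau^G,U)=0$ under the Maschke decomposition, so the two arguments are interchangeable.
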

\begin{proof}
    Let $\omega$ be the associated non-degenerate alternating $G$-equivariant bilinear pairing on $\tau$.  Since $\omega$ is non-degenerate, $\abs{\tau}$ is even, and so it is sufficient to check that $\omega$ remains non-degenerate when restricted to $\tau^G$.  Let $a\in\tau^G$, and choose $b\in\tau$ such that $\omega(a,b)\neq 0$.  Then note that, since $\omega$ is $G$-equivariant, $\omega\braces{a, \sum_{g\in G}g\cdot b}=\braces{\#G}\omega\braces{a, b}\neq 0$.
    In particular, since $\sum_{g\in G}g\cdot b\in \tau^G$, the pairing remains non-degenerate on $\tau^G$.
\end{proof}
\noindent We are now ready to prove the bound for symplectic representations.

\begin{proof}[Proof of \Cref{prop:gp filtration R ineq}]
    Note that 
    \begin{align*}
    \abs{\tau_1}\aG(\tau_2)+\abs{\tau_2}\aG(\tau_1)-\aG(\tau_1\otimes\tau_2)
    &=\sum_{i\geq 0}\frac{\abs{\tau_1}\epsg_i(\tau_2)+\abs{\tau_2}\epsg_i(\tau_1)-\epsg_i(\tau_1\otimes\tau_2)}{[G_0:G_i]}.
    \end{align*}
    Thus, by \Cref{lemRGi}, it suffices to prove
    \begin{equation*}
    \sum_{i \geq 0} \frac{\epsg_i(\tau_1)\epsg_i(\tau_2)}{[G_0:G_i]} \geq 2 \min_j \left( \sum_{i \geq 0} \frac{\epsg_i(\tau_j)}{[G_0:G_i]} \right).
    \end{equation*}
    By \Cref{lemsymp}, $\epsg_i(\tau_j) \in 2\ZZ_{\geq 0}$, hence this follows from \Cref{lemseq}.
\end{proof}

\subsubsection{Representations of \texorpdfstring{$p$}{p}-groups with rational characteristic polynomial}
We now consider tensor products of representations of $p$-groups which have rational characteristic polynomial.  The main result here is the following.
\begin{proposition}
    \label{prop:gp filtration p-group inequality}
Let $p$ be a prime number, and $\cG=(G_i)_{i\geq 0}$ be a finite group filtration such that $G_0$ is a $p$-group.  Let $\tau_1, \tau_2$ be representations of $G_0$ such that for every $g\in G_0$ and every $j\in\set{1,2}$ the characteristic polynomial of $\tau_j(g)$ has coefficients in $\QQ$.  Then
\[\aG(\tau_1\otimes\tau_2) \leq \abs{\tau_1}\aG(\tau_2) + \abs{\tau_2}\aG(\tau_1)-(p-1) \min\set{\aG(\tau_1), \aG(\tau_2)} - \delg(\tau_1,\tau_2).\]
\end{proposition}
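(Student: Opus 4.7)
The plan is to mirror the argument for the symplectic case in \Cref{prop:gp filtration R ineq}, replacing the divisibility-by-$2$ input from \Cref{lemsymp} with an analogous divisibility by $p-1$ forced by the rationality hypothesis.

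First I would invoke \Cref{lemRGi} termwise to rewrite
\[\abs{\tau_1}\aG(\tau_2)+\abs{\tau_2}\aG(\tau_1)-\aG(\tau_1\otimes\tau_2)-\delg(\tau_1,\tau_2)=\sum_{i\geq 0}\frac{\epsg_i(\tau_1)\epsg_i(\tau_2)}{[G_0:G_i]},\]
reducing the proposition to the assertion that the right-hand side is at least $(p-1)\min_j \aG(\tau_j)$.  By \Cref{lemseq} applied with $M=p-1$, and using that each sequence $(\epsg_i(\tau_j))_i$ is decreasing with non-negative entries, this reduces further to showing that every $\epsg_i(\tau_j)$ lies in $(p-1)\ZZ_{\geq 0}$.

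The main work, and what I expect to be the principal obstacle, is to establish the following divisibility lemma, playing the role of \Cref{lemsymp}: if $H$ is a finite $p$-group and $\tau$ is a complex representation of $H$ such that the characteristic polynomial of $\tau(h)$ lies in $\QQ[x]$ for every $h\in H$, then $p-1$ divides $\abs{\tau}-\abs{\tau^H}$.  Applied to $H=G_i$, which is itself a $p$-group as a subgroup of $G_0$, and to the restrictions $\tau_j|_{G_i}$, this supplies exactly the divisibility needed above.

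I would prove this lemma by induction on $|H|$, the case $|H|=1$ being trivial.  For the inductive step, pick a central subgroup $Z\leq Z(H)$ of order $p$ and decompose $\tau=\tau^Z\oplus W$, where $\tau^Z$ is the $Z$-fixed subspace and $W$ is the sum of the nontrivial $Z$-isotypic components; both summands are $H$-stable as $Z$ is central, and $\tau^H=(\tau^Z)^H$.  The rationality of the characteristic polynomial of each $z\in Z$ on $\tau$ forces the $Z$-isotypic multiplicities to be constant along the Galois orbits in $\hat{Z}$, and since $\gal(\QQ(\zeta_p)/\QQ)$ acts transitively on the $p-1$ nontrivial characters of $Z\cong \ZZ/p$, we obtain $p-1\mid\abs{W}$.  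Meanwhile the formula $\chi_{\tau^Z}(h)=\frac{1}{p}\sum_{z\in Z}\chi_\tau(hz)$, applied to all powers of $h$, shows that $\tau^Z$ inherits the rational-characteristic-polynomial property as a representation of $H/Z$; the inductive hypothesis then gives $p-1\mid\abs{\tau^Z}-\abs{\tau^H}$, and adding the two divisibilities completes the step.
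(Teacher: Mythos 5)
Your proposal is correct, and its skeleton is the same as the paper's: apply \Cref{lemRGi} termwise, then \Cref{lemseq} with $M=p-1$, reducing everything to a statement about $\abs{\tau}-\abs{\tau^{G_i}}$ for the $p$-groups $G_i$. The divergence is in that key statement. The paper's \Cref{lem:pgrouprationalreps} proves only that $\abs{\tau}-\abs{\tau^{G}}\in\set{0}\cup\ZZ_{\geq (p-1)}$, which is all that \Cref{lemseq} requires, and does so in three lines by looking at a single nontrivial irreducible constituent: either its dimension is a positive power of $p$, or it is a linear character of $p$-power order and hence comes with at least $p-1$ Galois conjugates, all forced into $\tau$ by rationality. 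You instead prove the stronger claim that $p-1$ divides $\abs{\tau}-\abs{\tau^{G}}$, by induction on $\#G$ via a central subgroup $Z$ of order $p$: the $Z$-isotypic multiplicities are constant on the single Galois orbit of nontrivial characters of $Z$, so $p-1$ divides $\abs{W}$, and the averaging formula shows $\tau^Z$ again has rational character as a representation of $G/Z$ (note that rationality of all characteristic polynomials is equivalent to rationality of the character, since power sums of $\tau(g)$ are traces of $\tau$ at powers of $g$). Your argument is sound and yields a genuinely stronger divisibility, closer in spirit to the even-ness statement of \Cref{lemsymp} used in \Cref{prop:gp filtration R ineq}, at the cost of an induction the paper avoids; for the proposition itself the extra strength buys nothing, since \Cref{lemseq} only needs each term to be zero or at least $p-1$.
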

We first remind the reader of an important standard result, for which we present a short proof.
\begin{lemma}
\label{lem:pgrouprationalreps}
    Let $p$ be a prime number and $G$ be a finite $p$-group.  Let $\tau$ be a representation of $G$, and assume that for all $g\in G$ the characteristic polynomial of $\tau(g)$ has coefficients in $\QQ$.  Then 
    \[\abs{\tau}-\abs{\tau^G}\in \set{0}\cup \ZZ_{\geq (p-1)}.\]
\end{lemma}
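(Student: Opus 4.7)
The plan is to translate the hypothesis on characteristic polynomials into a rationality statement about the character $\chi_\tau$, then decompose $\tau$ into irreducible constituents and analyse the Galois orbits of those constituents, exploiting the fact that irreducible characters of a $p$-group have dimension a power of $p$ and take values in cyclotomic fields $\QQ(\zeta_{p^e})$.

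First I would note that the hypothesis immediately implies $\chi_\tau(g) \in \QQ$ for every $g \in G$, since $\chi_\tau(g)$ is (up to sign) a coefficient of the characteristic polynomial of $\tau(g)$. Decomposing $\tau \cong \bigoplus_\chi m_\chi \chi$ into irreducible characters over $\CC$, one has $\abs{\tau^G} = m_{\triv}$, and so
\[\abs{\tau} - \abs{\tau^G} = \sum_{\chi \neq \triv} m_\chi \chi(1).\]
Since $\chi_\tau$ is rational, standard character-orthogonality gives $m_\chi = m_{\sigma\chi}$ for every $\sigma \in \gal(\overline{\QQ}/\QQ)$, so the sum above is constant along Galois orbits of non-trivial irreducible characters. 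It therefore suffices to show that each such orbit contributes at least $p-1$ to the sum, since then either no non-trivial orbits appear (in which case $\abs{\tau}-\abs{\tau^G}=0$) or at least one does (in which case the sum is at least $p-1$).

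To bound the contribution of a single Galois orbit of a non-trivial irreducible $\chi$, I would split on $\chi(1)$. Because $G$ is a $p$-group, $\chi(1)$ is a power of $p$. If $\chi(1) \geq p$, then already $\chi(1) \geq p > p-1$ and the orbit size (at least $1$) is enough. Otherwise $\chi(1)=1$, so $\chi$ is a non-trivial linear character factoring through a cyclic quotient of $G$ of some order $p^f \geq p$; the field of values $\QQ(\chi)$ then equals $\QQ(\zeta_{p^f})$, and hence the Galois orbit has size $[\QQ(\zeta_{p^f}):\QQ] = p^{f-1}(p-1) \geq p-1$, as required.

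The only mildly subtle step is this dimension case split (where it is important that irreducible representations of a $p$-group have dimension a power of $p$); the rest is routine character theory, and I do not anticipate any serious obstacles.
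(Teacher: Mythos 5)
Your proof is correct and follows essentially the same route as the paper's: both reduce to a nontrivial irreducible constituent, split on whether its dimension is $1$ or at least $p$, and in the linear case use that the Galois conjugates (at least $p-1$ of them, since the field of values is $\QQ(\zeta_{p^f})$) all occur in $\tau$ by rationality. Your version merely makes the "conjugates occur with equal multiplicity" step more explicit via rationality of the character.
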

\begin{proof}
    If $\tau=\tau^G$ then the result is apparent, so we assume that there is a nontrivial irreducible subrepresentation $\chi$ of $\tau$. Since $\abs{\chi}\mid \#G$, and so is a power of $p$, if $\chi$ is not $1$-dimensional then the result follows.  Hence it remains to consider the case that $\chi$ is a $1$-dimensional representation of the $p$-group $G$.  Since $\chi$ is a character of order a power of $p$, it has at least $p-1$ conjugate representations, and since the characteristic polynomials of all elements are rational these conjugates must all be subrepresentations of $\tau$, so again the result follows.
\end{proof}
\noindent Our proposition now follows.
\begin{proof}[Proof of \Cref{prop:gp filtration p-group inequality}]
    By \Cref{lemRGi}, we know that for all $i\geq 0$
    \[\aG(\tau_1\otimes\tau_2) = \abs{\tau_1}\aG(\tau_2)+\abs{\tau_2}\aG(\tau_1) - \sum_{i\geq 0}\frac{\aG_i(\tau_1)\aG_i(\tau_2)}{[G_0:G_i]}-\delg(\tau_1,\tau_2),\]
    so it is equivalent to prove the claim that $\sum_{i\geq 0}\frac{\aG_i(\tau_1)\aG_i(\tau_2)}{[G_0:G_i]}\geq (p-1)\min\set{\aG(\tau_1),\aG(\tau_2)}$, which follows from \Cref{lem:pgrouprationalreps} and \Cref{lemseq}.
\end{proof}

\section{Weil--Deligne Representations}\label{sec:arithmetic}
We begin this section by reminding the reader of some standard results on Weil--Deligne representations, which we will use throughout.  We then go on to prove our main local results from the introduction, and deduce the global conductor results from these at the end.
\subsection{Background}\label{subsec:WD reps background}
Recall that (Frobenius-semisimple) Weil--Deligne representations are isomorphic to direct sums of representations of the form $\sigma_n\otimes \sp(n)$ \cite{Tate}*{4.1.5}, where $\sp(n)$ is the special representation \cite{Tate}*{4.1.4} and $\sigma_n$ is a Weil representation.  For abelian varieties, it is well-known that the corresponding Weil--Deligne representation is of a specific shape.
\begin{proposition}[for example, \cite{Sab}*{Proposition 1.10}, \cite{D2LocalGal}*{Corollary 6}]
\label{prop:abvarrep}
    Let $A/F$ be an abelian variety over a non-Archimedean local field, and $\rho_A$ the corresponding Weil--Deligne representation. Then there exists a(n essentially) symplectic Weil representation $\tau$ and self-dual Artin representation $\sigma$ such that 
    \[\rho_A \cong \tau \oplus (\sigma \otimes \sp(2)).\]
    Moreover, denoting the inertia subgroup by $G_0\leq G_F$, for every $g\in G_0$ the characteristic polynomial of $\rho_A(g)$ has coefficients in $\QQ$.
\end{proposition}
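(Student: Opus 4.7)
The plan is to derive the decomposition from the geometry of the Néron model of $A$ via the semistable reduction theorem, and to extract the rationality on inertia from compatibility of the $\ell$-adic systems attached to $A$.  After replacing $F$ by a finite Galois extension $L/F$ over which $A$ acquires semistable reduction, the identity component of the special fibre of the Néron model of $A_L$ fits into an exact sequence $0 \to T \to \mathcal{A}_s^0 \to B \to 0$ with $T$ a torus and $B$ an abelian variety over the residue field.  Standard arguments produce a $G_L$-stable three-step filtration on $V_\ell A$ whose graded pieces are two ``toric'' pieces coming (up to Tate twist) from the character and cocharacter lattices of $T$, together with a middle unramified piece $V_\ell B$, with the two toric pieces matched under the Weil pairing on $V_\ell A$.

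Passing to Frobenius-semisimplifications and descending back to $G_F$, Grothendieck's monodromy theorem identifies the nilpotent operator $N$ with the map collapsing one toric piece onto the other, so that the two toric pieces together assemble into a summand of the form $\sigma \otimes \sp(2)$; here $\sigma$ is the Artin representation of $G_F$ coming from the finite-image Galois action on $X^*(T) \otimes \QQ$.  Self-duality of $\sigma$ follows from the canonical identification $X_*(T) \cong X^*(T)^\vee$ together with the existence of a polarisation $A \to A^\vee$, which matches the toric part of $A$ with that of $A^\vee$.  The middle piece descends to a Weil representation $\tau$ of $G_F$, and its essential symplecticity comes from the fact that the polarisation endows $V_\ell A$ with an essentially symplectic pairing which is perfect between the two toric pieces, hence restricts non-degenerately to $\tau$.

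For the rationality claim, once the $\sp(2)$ factor has absorbed all nilpotent monodromy, both $\sigma$ and $\tau$ restrict to finite-image representations on inertia $G_0$ while $\sp(2)$ is trivial there, so it suffices to verify rationality of the characteristic polynomials of $\sigma(g)$ and $\tau(g)$ separately.  The representation $\sigma$ is literally defined over $\QQ$---it is the Galois action on the $\QQ$-vector space $X^*(T) \otimes \QQ$---so its characteristic polynomials are automatically rational.  For $\tau$, inertia preserves the $\ZZ_\ell$-lattice $T_\ell A$ for every prime $\ell \neq p$ simultaneously, and for abelian varieties the characteristic polynomial of any fixed $g \in G_0$ acting on $V_\ell A$ is independent of $\ell$; this forces its coefficients to lie in $\bigcap_{\ell \neq p} \ZZ_\ell = \ZZ$.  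I expect the main obstacle to be the bookkeeping of the decomposition---particularly the verification of self-duality of $\sigma$ and the clean descent of $\tau$ from a $G_L$- to a $G_F$-representation---rather than any single hard technical step.
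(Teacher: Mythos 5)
The paper does not actually prove this proposition --- it is quoted from the literature (\cite{Sab}*{Proposition 1.10}, \cite{D2LocalGal}*{Corollary 6}) --- and your sketch is exactly the standard argument underlying those references: semistable reduction, the toric--abelian monodromy filtration on $V_\ell A$, Grothendieck's monodromy theorem to assemble the two toric graded pieces into $\sigma\otimes\sp(2)$, the polarisation/Weil pairing for essential symplecticity of $\tau$ and self-duality of $\sigma$, and $\ell$-independence for rationality on inertia; the outline is correct. One small imprecision: the step ``the characteristic polynomial of $g$ on $V_\ell A$ is independent of $\ell$, so its coefficients lie in $\bigcap_{\ell\neq p}\ZZ_\ell=\ZZ$'' is circular as phrased, because coefficients for different $\ell$ live in different fields $\QQ_\ell$ and cannot be compared until one already knows they are rational; the theorem you need (SGA~7, Exp.~IX, Th\'eor\`eme 4.3, or the corollaries cited in the statement) asserts integrality and $\ell$-independence simultaneously, so it should be invoked in that form rather than having rationality deduced from independence.
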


\noindent We use the following result to compute conductor exponents for twists of the special representations.
\begin{lemma}[\cite{Rohr}*{Proposition p.141}]
\label{spcond}
    Let $\sigma$ be a Weil representation over a non-Archimedean local field. Then 
    \[a(\sigma \otimes \sp(n)) = na(\sigma) + (n{-}1)\abs{\sigma^{G_0}},\]
    where $G_0\leq G_F$ is the inertia subgroup.
\end{lemma}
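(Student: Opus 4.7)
The plan is to use the standard decomposition of the Artin conductor of a Weil--Deligne representation $(V,\rho,N)$ as
\[a(V) = a(\rho) + \abs{V^{G_0}} - \abs{\ker(N|_{V^{G_0}})},\]
where the first term is the Artin conductor of the underlying Weil representation, and the latter two comprise the monodromy contribution. I would then compute each piece separately for $V = \sigma \otimes \sp(n)$.

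For the Weil contribution, I would unwind the definition of $\sp(n)$: as a Weil representation it is $\bigoplus_{i=0}^{n-1}\omega^i$ for some fixed unramified character $\omega$ (with the nilpotent shift giving the $\sp(n)$ structure). Tensoring with $\sigma$ produces $\bigoplus_{i=0}^{n-1}\sigma\otimes\omega^i$, and since the Artin conductor is invariant under unramified twists, each summand contributes $a(\sigma)$, yielding a total Weil-representation contribution of $n\,a(\sigma)$.

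For the monodromy contribution, since $G_0$ acts trivially on $\sp(n)$, we have $V^{G_0} = \sigma^{G_0}\otimes \sp(n)$, of dimension $n\abs{\sigma^{G_0}}$. The operator $N$ acts on $\sp(n)$ as a single nilpotent Jordan block, so $\ker(N|_{\sp(n)})$ is one-dimensional; hence $\ker(N|_{V^{G_0}}) = \sigma^{G_0}\otimes \ker(N|_{\sp(n)})$ has dimension $\abs{\sigma^{G_0}}$. Subtracting, the monodromy contribution is $(n-1)\abs{\sigma^{G_0}}$, and summing with the Weil part gives the desired formula.

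I expect no substantial obstacle: once the Tate-style conventions for $\sp(n)$ and the standard conductor formula for Weil--Deligne representations are in hand, the computation is essentially immediate. The only minor point to verify is that the particular unramified characters $\omega^i$ appearing as graded pieces of $\sp(n)$ are irrelevant, which is guaranteed by the invariance of $a(\cdot)$ under unramified twist.
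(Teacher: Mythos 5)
Your proof is correct. The paper does not prove this lemma at all --- it is quoted directly from Rohrlich (\emph{Proposition, p.~141} of the cited reference) --- and your argument is precisely the standard derivation of that result: apply Tate's formula $a(V)=a(\rho)+\dim V^{G_0}-\dim\bigl(\ker N\cap V^{G_0}\bigr)$ to $V=\sigma\otimes\sp(n)$, use that the underlying Weil representation of $\sp(n)$ is a sum of $n$ unramified twists of $\sigma$ (so contributes $n\,a(\sigma)$, since the Artin conductor depends only on the restriction to inertia), and note that $N=1\otimes N_{\sp(n)}$ is a single Jordan block tensored with the identity, giving monodromy contribution $(n-1)\abs{\sigma^{G_0}}$. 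All steps check out, including the identification $\ker N\cap V^{G_0}=\sigma^{G_0}\otimes\ker N_{\sp(n)}$.
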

We combine these results, together with standard results for tensor products of special representations, to note the following useful result.

\begin{lemma}
\label{lem:conductor of TP of spn spm}  Let $\sigma$ be a Weil representation over a non-Archimedean local field, and $m,n\in\ZZ_{\geq 1}$ be positive integers, then
    \[a\braces{\sigma\otimes\sp(n)\otimes\sp(m)}=nm\cdot a(\sigma)+\abs{\sigma^{G_0}}\braces{nm-\min\set{n,m}},\]
    where $G_0\leq G_F$ is the inertia subgroup.
\end{lemma}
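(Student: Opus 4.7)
The plan is to reduce to \Cref{spcond} via the standard Clebsch--Gordan decomposition of a tensor product of two special Weil--Deligne representations. Explicitly, for $n,m\geq 1$ one has
\[\sp(n)\otimes\sp(m) \cong \bigoplus_{k=0}^{\min\set{n,m}-1} \alpha_k\otimes\sp(n+m-1-2k),\]
where the $\alpha_k$ are unramified characters of the Weil group; their precise form depends on normalisation conventions, but only their unramifiedness matters for the conductor calculation. Tensoring through with $\sigma$ and using additivity of the Artin conductor under direct sums then reduces the claim to evaluating
\[\sum_{k=0}^{\min\set{n,m}-1} a\braces{\sigma\otimes\alpha_k\otimes\sp(n+m-1-2k)}.\]

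For each summand, since $\alpha_k$ is unramified we have $a(\sigma\otimes\alpha_k)=a(\sigma)$ and $\abs{(\sigma\otimes\alpha_k)^{G_0}}=\abs{\sigma^{G_0}}$. Applying \Cref{spcond} to $\sigma\otimes\alpha_k$ in place of $\sigma$ therefore gives
\[a\braces{\sigma\otimes\alpha_k\otimes\sp(n+m-1-2k)} = (n+m-1-2k)\,a(\sigma) + (n+m-2-2k)\,\abs{\sigma^{G_0}}.\]

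All that remains is to sum two arithmetic progressions in $k$. Assuming without loss of generality that $n\leq m$, so $\min\set{n,m}=n$, a direct calculation yields $\sum_{k=0}^{n-1}(n+m-1-2k)=nm$ and $\sum_{k=0}^{n-1}(n+m-2-2k)=nm-n$, which matches the claimed formula. The proof has essentially no obstacle: the only point requiring a moment's care is checking that the twists $\alpha_k$ in the decomposition are unramified, but this is standard and follows from the fact that $\sp(n)$ is itself assembled out of the unramified characters $\abs{\cdot}^i$.
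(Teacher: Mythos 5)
Your proof is correct and follows essentially the same route as the paper: decompose $\sp(n)\otimes\sp(m)$ into unramified twists of special representations, note that unramified twists affect neither $a(\sigma)$ nor $\abs{\sigma^{G_0}}$, and apply \Cref{spcond} termwise. The only cosmetic difference is that you use the explicit Clebsch--Gordan indices and sum the arithmetic progressions, whereas the paper only invokes the facts that the dimensions $k_i$ sum to $nm$ and that there are $\min\set{n,m}$ summands.
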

\begin{proof}
    It is a standard result that $\sp(n)\otimes\sp(m)\cong \bigoplus_{i=1}^r\psi_i\otimes \sp(k_i)$, for some unramified Weil characters $\psi_i$, and note that $\sum_{i=1}^rk_i=nm$, and $r=\min\set{n,m}$ (see for example \cite{BushHen2}*{Proposition 2.5 and proof}.  Thus, using \Cref{spcond}
    \begin{align*}
        a(\sigma\otimes\sp(n)\otimes\sp(m))&=\sum_{i=1}^ra\braces{\sigma\otimes\sigma_i\otimes \sp(k_i)}
        \\&=\sum_{i=1}^r\braces{k_ia(\sigma)+\braces{k_i-1}\abs{\sigma^{G_0}}}
        \\&=nm\cdot a(\sigma)+\abs{\sigma^{G_0}}\braces{nm-\min\set{n,m}}.
    \end{align*}
\end{proof}

\subsection{Conductor exponents for tensor products}\label{subsec:WD reps conductor exponents for tps}

We now consider tensor products of representations associated to abelian varieties.  We begin with the case that one of the varieties is semistable, where we write an exact equality for the conductor exponent of the tensor product.  Following this we provide a bound for the general case, where our approach will be to split the conductor exponent into the tame part and wild part (i.e. the Swan conductor).  We deduce the min inequality for the wild part from \Cref{prop:gp filtration R ineq}, since the action of wild inertia factors through a finite quotient, and for the tame part we prove a stronger max inequality.  There are two families of special cases where the max inequality fails to hold but those are contained in the semistable case.
\subsubsection{The semistable case}
We begin by computing the exact conductor exponent in the case that one abelian variety is semistable.
\begin{theorem}\label{thm:semistable case}
Let $A$ and $B$ be abelian varieties over a non-Archimedean local field $F$ and suppose that $A$ is semistable. Then
\[a(\rho_A \otimes \rho_B) = \abs{\rho_A}a(\rho_B) + \deg(B)a(\rho_A) - \braces{\deg(A\boxtimes B)-\deg(A)\deg(B)}.\]
\end{theorem}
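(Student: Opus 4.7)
The plan is to reduce the identity to a direct combinatorial computation by fully decomposing both Weil--Deligne representations into their special constituents, and then applying \Cref{lem:conductor of TP of spn spm} piecewise.

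Since $A$ is semistable, the inertia subgroup $G_0\leq G_F$ acts unipotently on $\rho_A$. Combined with the standard decomposition $\rho_A \cong \bigoplus_i \sigma_i \otimes \sp(n_i)$, the inertia action on each $\sigma_i$ must be both unipotent and (Frobenius-)semisimple, and hence trivial. Decomposing each unramified Weil representation $\sigma_i$ further into characters, I may therefore write
\[\rho_A \cong \bigoplus_{i=1}^{r_A} \psi_i \otimes \sp(n_i)\]
with each $\psi_i$ an unramified Weil character; in fact \Cref{prop:abvarrep} forces $n_i \in \set{1,2}$, though the argument does not require this. Separately, write $\rho_B \cong \bigoplus_{j=1}^{r_B} \sigma_j \otimes \sp(m_j)$ in standard form.

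Distributing the tensor product and using additivity of $a(\cdot)$ on direct sums reduces matters to computing $a\braces{(\psi_i\otimes \sigma_j)\otimes \sp(n_i)\otimes\sp(m_j)}$ for each pair $(i,j)$. This is supplied by \Cref{lem:conductor of TP of spn spm}, together with the fact that $\psi_i$ is unramified, giving $a(\psi_i\otimes \sigma_j) = a(\sigma_j)$ and $\abs{(\psi_i\otimes\sigma_j)^{G_0}} = \abs{\sigma_j^{G_0}}$. Summing over $(i,j)$ yields
\[a(\rho_A\otimes\rho_B) = \sum_{i,j}\bigl(n_im_j a(\sigma_j) + \abs{\sigma_j^{G_0}}(n_im_j - \min\set{n_i,m_j})\bigr).\]

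It remains to rewrite the claimed right-hand side in the same data and verify agreement. To this end I would record: $\abs{\rho_A} = \sum_i n_i$ and $a(\rho_A) = \sum_i(n_i-1)$ (the latter by \Cref{spcond} applied to each $\psi_i\otimes\sp(n_i)$); $\deg(A) = r_A$ (each $\psi_i$ being unramified, so each summand contributes $1$ to the degree) and $\deg(B) = \sum_j \abs{\sigma_j^{G_0}}$; $a(\rho_B) = \sum_j(m_j a(\sigma_j)+(m_j-1)\abs{\sigma_j^{G_0}})$ by \Cref{spcond}; and, applying the same monodromy-kernel analysis to $\sp(n_i)\otimes\sp(m_j)$, $\deg(A\boxtimes B) = \sum_{i,j}\min\set{n_i,m_j}\cdot \abs{\sigma_j^{G_0}}$. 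Substituting and expanding, the correction term becomes $\deg(A\boxtimes B) - \deg(A)\deg(B) = \sum_{i,j}(\min\set{n_i,m_j} - 1)\abs{\sigma_j^{G_0}}$, which is exactly what is needed to convert the combination $\sum_{i,j} n_i(m_j-1)\abs{\sigma_j^{G_0}} + \sum_{i,j}(n_i-1)\abs{\sigma_j^{G_0}}$ into $\sum_{i,j}(n_im_j - \min\set{n_i,m_j})\abs{\sigma_j^{G_0}}$, and the identity holds. The main obstacle is not any individual step but the careful bookkeeping: the matching of the $\min\set{n_i,m_j}$ contributions from \Cref{lem:conductor of TP of spn spm} against those inside $\deg(A\boxtimes B)$ is precisely what makes semistability yield an \emph{equality} rather than merely an inequality.
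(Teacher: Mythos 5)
Your proof is correct and follows essentially the same route as the paper: decompose both representations into $\sigma\otimes\sp(n)$ constituents (with the $\sigma$'s unramified on the $A$ side by semistability) and apply \Cref{spcond} and \Cref{lem:conductor of TP of spn spm} summand by summand; your bookkeeping of the $\min\set{n_i,m_j}$ terms against $\deg(A\boxtimes B)$ checks out. The only differences are cosmetic: the paper restricts to inertia and uses the specific shape $\triv^m\oplus\sp(2)^n$ from \Cref{prop:abvarrep}, whereas you allow arbitrary $n_i,m_j$, which makes your version marginally more general but mathematically equivalent.
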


\begin{proof}
    Let $G_0\leq G_F$ be the inertia subgroup.  By assumption, we can write $\Res_{G_0}\rho_A \cong \triv^m  \oplus \sp(2)^n$ for some $n,m\in\ZZ_{\geq 0}$.  Moreover, using \Cref{prop:abvarrep} we write $\rho_B= \tau \oplus (\sigma \otimes \sp(2))$ for two Weil representations $\sigma,\tau$.  

    From this description, via \Cref{spcond} and \Cref{lem:conductor of TP of spn spm}, we compute:
    \begin{align*}
    a(\rho_A) &= n, \\
    a(\rho_B) &= a(\tau)+2a(\sigma)+\abs{\sigma^{G_0}},\\
    a(\rho_A \otimes \rho_B) &=ma(\tau)+2na(\tau)+n\abs{\tau^{G_0}}+2ma(\sigma)+m\abs{\sigma^{G_0}}+n\braces{4a(\sigma)+2\abs{\sigma^{G_0}}}\\
    &= n\abs{\tau^{G_0}}+ \abs{\rho_A}a(\rho_B).
    \end{align*}
    On the other hand, by definition
    \begin{align*}
    \deg(A)=\abs{\rho_A^{G_0}} &= m+n, \\
    \deg(B)=\abs{\rho_B^{G_0}} &= \abs{\tau^{G_0}}+ \abs{\sigma^{G_0}}, \\
    \deg(A \boxtimes B)=\abs{\braces{\rho_A\otimes\rho_B}^{G_0}} &= m\braces{\abs{\tau^{G_0}}+\abs{\sigma^{G_0}}}+n\braces{\abs{\tau^{G_0}}+2\abs{\sigma^{G_0}}}\\
    &=n\abs{\sigma^{G_0}} + (m+n)(\abs{\tau^{G_0}} + \abs{\sigma^{G_0}}).
    \end{align*}
    From this it is then clear that
    \begin{align*}
    \abs{\rho_A}a(\rho_B)& + \deg(B)a(\rho_A) - \braces{\deg(A\boxtimes B)-\deg(A)\deg(B)}\\
    &=\abs{\rho_A}a(\rho_B)+n\braces{\abs{\tau^{G_0}}+\abs{\sigma^{G_0}}}-n\abs{\sigma^{G_0}}\\
    &=a(\rho_{A}\otimes\rho_{B}),
    \end{align*}
    as required.
\end{proof}

\subsubsection{The general case}

We now consider the situation where $A$ and $B$ are arbitrary abelian varieties.

\begin{proposition}\label{thm:swancond of tensor of abvars}
Let $A$ and $B$ be abelian varieties over a non-Archimedean local field $F$ of residue characteristic $p$.  Then 
\[\Sw(\rho_A \otimes \rho_B)\leq \abs{\rho_A}\Sw(\rho_B) + \abs{\rho_B}\Sw(\rho_A) - \max\set{2, p-1}\min \set{\Sw(\rho_A), \Sw(\rho_B)}.\]
\end{proposition}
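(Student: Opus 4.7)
The plan is to express $\Sw(\rho_A \otimes \rho_B)$ in the group-filtration framework of \Cref{sec:group conductors} and invoke the abstract bounds \Cref{prop:gp filtration R ineq} and \Cref{prop:gp filtration p-group inequality}.  By \Cref{example:swanconductor}, the Swan conductor equals $\aG$ with respect to the filtration $\cG = (G_{i+1})_{i \geq 0}$, whose ``top group'' is the wild inertia $G_1$.  Since $G_1$ is pro-$p$ and $\rho_A,\rho_B$ factor through finite quotients of $G_F$, the image of $G_1$ in either representation is a finite $p$-group, so after restriction to $G_1$ the hypotheses of both group-theoretic results are potentially in play.

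First I would apply \Cref{prop:gp filtration p-group inequality} to $\rho_A|_{G_1}$ and $\rho_B|_{G_1}$.  The rationality statement of \Cref{prop:abvarrep} holds on all of inertia $G_0$, and a fortiori on $G_1$, so the proposition applies and yields
\[\Sw(\rho_A \otimes \rho_B) \leq \abs{\rho_A}\Sw(\rho_B) + \abs{\rho_B}\Sw(\rho_A) - (p-1)\min\set{\Sw(\rho_A),\Sw(\rho_B)} - \delg(\rho_A|_{G_1},\rho_B|_{G_1}),\]
whose last term is nonnegative and may be discarded.

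Next I would apply \Cref{prop:gp filtration R ineq}, which requires $\rho_A|_{G_1}$ and $\rho_B|_{G_1}$ to be symplectic.  Writing $\rho_A = \tau_A \oplus (\sigma_A \otimes \sp(2))$ as in \Cref{prop:abvarrep}: the Weil representation underlying $\sp(2)$ is unramified, so $(\sigma_A \otimes \sp(2))|_{G_1} \cong \sigma_A|_{G_1}^{\oplus 2}$, which is symplectic because the doubling of any self-dual representation carries a natural hyperbolic symplectic form.  For the essentially symplectic piece $\tau_A$, the twist character witnessing the pairing arises from the cyclotomic/absolute-value twist of the Weil pairing and is unramified on the Weil group, hence trivial on $G_1$; thus $\tau_A|_{G_1}$ is genuinely symplectic.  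The analogous argument applies to $\rho_B|_{G_1}$, so \Cref{prop:gp filtration R ineq} gives the same inequality as above with $p-1$ replaced by $2$, after again discarding $\delg$.

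Combining the two bounds yields the claimed inequality with constant $\max\set{2,p-1}$ in front of the minimum.  I expect the main obstacle to be the symplectic verification: carefully tracing that the essentially-symplectic twist character for $\tau_A$ is unramified is the point at which the specific arithmetic of abelian varieties (rather than the general theory of essentially symplectic Weil representations) enters the argument, and it is what unlocks the $2$-savings on top of the rationality-based $(p-1)$-savings.
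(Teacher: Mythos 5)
Your proposal is correct and takes essentially the same route as the paper: both pass to the wild-inertia filtration $(G_{i+1})_{i\geq 0}$, replace $\sigma\otimes\sp(2)$ by $\sigma^{\oplus 2}$ on inertia via \Cref{prop:abvarrep}, and invoke \Cref{prop:gp filtration p-group inequality} together with \Cref{prop:gp filtration R ineq}. The only cosmetic difference is that the paper applies exactly one abstract bound in each case (the rationality bound for $p>2$, the symplectic bound for $p=2$, since that single bound already realises $\max\{2,p-1\}$), whereas you run both for every $p$ and take the maximum.
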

\begin{proof}
Let $G_0\leq G_F$ be the inertia subgroup. By \Cref{prop:abvarrep} we have
\begin{align*}
    \rho_1:=\rho_A&=\tau_1\oplus\braces{\sigma_1\otimes \sp(2)},\\
    \rho_2:=\rho_B&=\tau_2\oplus\braces{\sigma_2\otimes \sp(2)},
\end{align*}
where for $j\in\set{1,2}$, $\Res_{G_0}\tau_j$ is a symplectic Artin representation and $\Res_{G_0}\sigma_j$ is a self-dual Artin representation.  Let $\tilde{\rho_j}=\tau_j\oplus\sigma_j^2$, note that since $\Res_{G_0}\tilde{\rho_j}\cong \Res_{G_0}{\rho_j}$, we have an equality $\Sw(\rho_j)=\Sw(\tilde{\rho_j})$, and so we work here instead.

We write $G=\gal(L/F^{\nr})$ for a finite Galois group through which the representations $\tilde{\rho_j}$ both factor, and let $G_i\leq G$ be the $i$th ramification group in the lower numbering.  We take the finite group filtration $\cG=\braces{G_{i+1}}_{i\geq 0}$, for which the quantity $\epsg$ is the Swan conductor (see also \Cref{example:swanconductor}).  If $p>2$, then as $G_1$ is a $p$-group and by \Cref{prop:abvarrep} the representations $\tilde{\rho_j}$ satisfy the conditions of \Cref{prop:gp filtration p-group inequality}, the result follows.  If instead $p=2$, then similarly the representations $\tilde{\rho_j}$ are symplectic, so we instead apply \Cref{prop:gp filtration R ineq}.
\end{proof}

\begin{rem}
    We ignore the possible improvement to our Swan conductor bound above coming from the $\delg$ term in \Cref{prop:gp filtration R ineq,prop:gp filtration p-group inequality}, since these terms do not clearly have a natural arithmetic interpretation.
\end{rem}

\begin{lemma}\label{lem:tensorprod tame part}
    Let $A$ and $B$ be abelian varieties over a non-Archimedean local field $F$.  Assume that both $a(\rho_A),a(\rho_B)>1$.
    Then
    \[\abs{\frac{\rho_A\otimes\rho_B}{\braces{\rho_A\otimes\rho_B}^{G_0}}}
    = \abs{\rho_A}\abs{\frac{\rho_B}{\rho_B^{G_0}}}+\abs{\rho_B}\abs{\frac{\rho_A}{\rho_A^{G_0}}} - \abs{\frac{\rho_A}{\rho_A^{G_0}}}\abs{\frac{\rho_B}{\rho_B^{G_0}}}-\bigg(\deg(A\boxtimes B)-\deg(A)\deg(B)\bigg)\]
\end{lemma}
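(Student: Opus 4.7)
My plan is to view the identity as a formal algebraic computation, once the convention for $\abs{\rho^{G_0}}$ in the Weil--Deligne setting is clarified. I will adopt the interpretation (consistent with the calculation in the proof of \Cref{thm:semistable case}, where the equality $\deg(A)=\abs{\rho_A^{G_0}}$ is used) that for a Weil--Deligne representation $\rho$, the quantity $\abs{\rho^{G_0}}$ denotes the dimension of the subspace fixed by inertia and killed by the monodromy operator; this coincides with the degree $\deg(\rho)$ of the local $L$-polynomial. In particular $\abs{\rho_A^{G_0}} = \deg(A)$, $\abs{\rho_B^{G_0}} = \deg(B)$, and $\abs{(\rho_A \otimes \rho_B)^{G_0}} = \deg(A \boxtimes B)$.

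With this in hand, the left-hand side simplifies immediately to $\abs{\rho_A}\abs{\rho_B} - \deg(A \boxtimes B)$. For the right-hand side, I will expand using $\abs{\rho/\rho^{G_0}} = \abs{\rho} - \abs{\rho^{G_0}}$ to rewrite it as
\[\abs{\rho_A}(\abs{\rho_B} - \deg(B)) + \abs{\rho_B}(\abs{\rho_A} - \deg(A)) - (\abs{\rho_A} - \deg(A))(\abs{\rho_B} - \deg(B)) - (\deg(A \boxtimes B) - \deg(A)\deg(B)).\]
The first three terms collapse to $\abs{\rho_A}\abs{\rho_B} - \deg(A)\deg(B)$ by a short algebraic manipulation, after which subtracting the final bracket yields $\abs{\rho_A}\abs{\rho_B} - \deg(A \boxtimes B)$, matching the left-hand side.

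I do not anticipate any substantive obstacle: the content of the lemma lies in the shape of the decomposition, which will feed into the main theorem once combined with the Swan conductor bound of \Cref{thm:swancond of tensor of abvars}, rather than in any deep representation-theoretic computation. The hypothesis $a(\rho_A),a(\rho_B)>1$ is included to restrict to the non-semistable regime where this lemma is needed, since the case $a(\rho_A)\leq 1$ is already handled exactly by \Cref{thm:semistable case}; the algebraic identity itself in fact holds without this assumption.
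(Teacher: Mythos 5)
Your proof is correct and takes essentially the same approach as the paper: both arguments are the same direct algebraic collapse of the right-hand side using $\abs{\rho/\rho^{G_0}}=\abs{\rho}-\abs{\rho^{G_0}}$ together with the identifications $\deg(A)=\abs{\rho_A^{G_0}}$, $\deg(B)=\abs{\rho_B^{G_0}}$, $\deg(A\boxtimes B)=\abs{\braces{\rho_A\otimes\rho_B}^{G_0}}$. Your reading of the convention for $\abs{\rho^{G_0}}$ matches the one the paper uses implicitly, and your observation that the hypothesis $a(\rho_A),a(\rho_B)>1$ plays no role in the identity itself is also accurate.
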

\begin{proof}
    We rearrange the right hand side of the claimed inequality, to obtain
    \begin{align*}
        &\abs{\rho_A}\abs{\frac{\rho_B}{\rho_B^{G_0}}}+\abs{\rho_B}\abs{\frac{\rho_A}{\rho_A^{G_0}}} - \abs{\frac{\rho_A}{\rho_A^{G_0}}}\abs{\frac{\rho_B}{\rho_B^{G_0}}}-\bigg(\deg(A\boxtimes B)-\deg(A)\deg(B)\bigg)
        \\&=\abs{\rho_A}\abs{\rho_B}-\abs{\rho_A^{G_0}}\abs{\rho_B^{G_0}}-
        \bigg(\abs{\braces{\rho_A\otimes\rho_B}^{G_0}}-\abs{\rho_A^{G_0}}\abs{\rho_B^{G_0}}\bigg)
        \\&=\abs{\frac{\rho_A\otimes\rho_B}{\braces{\rho_A\otimes\rho_B}^{G_0}}}.
    \end{align*}
\end{proof}

\begin{rem}
    If $\rho_A$ and $\rho_B$ are both tame representations, which necessarily occurs when $\dim(A),\dim(B)$ are strictly less than $(p-1)/2$ for the residue characteristic $p$, then this immediately presents the conductor exponent of the tensor product in terms of the same inputs we will bound with later.
\end{rem}

We now stitch together the tame and wild parts of the conductor to obtain the following theorem.

\begin{theorem}[\Cref{INTRO:thm:cond of tensor of abvars}]\label{thm:tensors of abvars}Let $A$ and $B$ be abelian varieties over a non-Archimedean local field $F$ of residue characteristic $p$.  Define the constant
    \[C_p(A,B):=\min\bigg\{\max\set{2,p-1},\ \abs{\rho_A} - \deg(A),\ \abs{\rho_B}-\deg(B)\bigg\}\]
Then
    \[a(\rho_A \otimes \rho_B)\leq \abs{\rho_A}a(\rho_B) +\abs{\rho_B}a(\rho_A)- C_p(A,B)\min \set{a(\rho_A), a(\rho_B)} - \bigg(\deg(A\boxtimes B)-\deg(A)\deg(B)\bigg).\]
    Moreover, if $a(\rho_A)\leq 1$ then in fact
    \[a(\rho_A \otimes \rho_B) = \abs{\rho_A}a(\rho_B) + \deg(B)a(\rho_A) - \bigg(\deg(A\boxtimes B)-\deg(A)\deg(B)\bigg).\]
\end{theorem}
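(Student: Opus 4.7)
The plan is to split the Artin conductor as $a(\rho) = \Sw(\rho) + (\abs{\rho} - \abs{\rho^{G_0}})$, bound each piece via results already established, and recombine via an elementary inequality. Throughout write $M := \max\{2, p-1\}$ and, for an abelian variety $X$, set $t_X := \abs{\rho_X} - \deg(X)$ for the tame part of $a(\rho_X)$, so that $a(\rho_X) = \Sw(\rho_X) + t_X$.

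For the main bound, the Swan part is controlled by \Cref{thm:swancond of tensor of abvars}, while the tame part is given by the identity underlying \Cref{lem:tensorprod tame part} (whose proof is a purely dimensional computation and so holds without any hypothesis on the conductors). Summing and regrouping these yields
\[a(\rho_A \otimes \rho_B) \leq \abs{\rho_A}a(\rho_B) + \abs{\rho_B}a(\rho_A) - M\min\{\Sw(\rho_A), \Sw(\rho_B)\} - t_A t_B - \bigl(\deg(A\boxtimes B) - \deg(A)\deg(B)\bigr).\]
To conclude the claimed bound it therefore suffices to prove
\[M\min\{\Sw(\rho_A), \Sw(\rho_B)\} + t_A t_B \geq C_p(A,B)\min\{a(\rho_A), a(\rho_B)\}.\]
Writing $L := C_p(A,B) = \min\{M, t_A, t_B\}$ and assuming without loss of generality that $a(\rho_A) \leq a(\rho_B)$, I would split on the sign of $\Sw(\rho_A) - \Sw(\rho_B)$. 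If $\Sw(\rho_A) \leq \Sw(\rho_B)$ then the bounds $M\Sw(\rho_A) \geq L\Sw(\rho_A)$ and $t_A t_B \geq L t_A$ (using $t_B \geq L$) suffice. If $\Sw(\rho_A) > \Sw(\rho_B)$, the relation $\Sw(\rho_A) - \Sw(\rho_B) \leq t_B - t_A$ forced by $a(\rho_A) \leq a(\rho_B)$ gives $L(\Sw(\rho_A) + t_A) \leq L\Sw(\rho_B) + L t_B$, which is in turn $\leq M\Sw(\rho_B) + t_A t_B$ (using $t_A \geq L$).

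For the moreover clause, the strategy is to reduce to \Cref{thm:semistable case} by showing that $a(\rho_A) \leq 1$ forces $\Res_{G_0}\rho_A$ to have the semistable shape $\triv^m \oplus \sp(2)^n$. The case $a(\rho_A)=0$ is immediate (then $\rho_A$ is unramified). For $a(\rho_A) = 1$, observe that $t_A = 0$ would force $\rho_A$ to be unramified and hence $a(\rho_A) = 0$, a contradiction; thus $t_A = 1$ and $\Sw(\rho_A) = 0$. Decomposing $\rho_A = \tau \oplus (\sigma\otimes\sp(2))$ as in \Cref{prop:abvarrep}, a direct computation using $\abs{(\sigma\otimes\sp(2))^{G_0}} = \abs{\sigma^{G_0}}$ gives $t_A = (\abs{\tau} - \abs{\tau^{G_0}}) + (2\abs{\sigma} - \abs{\sigma^{G_0}})$, while \Cref{lemsymp} applied to the symplectic $\tau|_{G_0}$ forces $\abs{\tau} - \abs{\tau^{G_0}}$ to be even. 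These two constraints, combined with $\abs{\sigma^{G_0}} \leq \abs{\sigma}$, pin down $\tau$ unramified and $\sigma$ a self-dual unramified character of dimension one; hence $\Res_{G_0}\rho_A \cong \triv^m \oplus \sp(2)$ and \Cref{thm:semistable case} delivers the claimed equality.

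The main obstacle is the elementary inequality in the second paragraph: although ultimately a short case analysis, the subtlety is that the orderings of $a(\rho_A), a(\rho_B)$ and of $\Sw(\rho_A), \Sw(\rho_B)$ need not coincide, so the precise relation $\Sw(\rho_A) - \Sw(\rho_B) \leq t_B - t_A$ must be used in the awkward case. A secondary subtlety is the parity argument ruling out the spurious possibility $(t_A, \Sw(\rho_A)) = (0,1)$ in the moreover clause, which uses the essentially symplectic structure of $\tau$ through \Cref{lemsymp}.
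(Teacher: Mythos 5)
Your proposal is correct and follows essentially the same route as the paper: split the conductor into tame and wild parts, bound the wild part by \Cref{thm:swancond of tensor of abvars}, compute the tame part exactly via the identity of \Cref{lem:tensorprod tame part}, recombine via an elementary min/max inequality on $t_A t_B + \max\set{2,p-1}\min\set{\Sw(\rho_A),\Sw(\rho_B)}$, and reduce the moreover clause to \Cref{thm:semistable case}. The only (harmless) deviations are your observation that the tame identity needs no hypothesis on the conductors, which lets you derive the bound uniformly rather than by the paper's case split, and a slightly different but equally valid case analysis for the elementary inequality.
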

\begin{proof}
We begin with the case $a(\rho_A)\leq 1$.  Since $a(\rho_A)\leq 1$, by \Cref{spcond} $A$ is semistable, so this is a consequence of \Cref{thm:semistable case}.  It remains in this case to show that the claimed equality is within the proposed bound.  In other words we must show that $\abs{\rho_B/\rho_B^{G_0}}a(\rho_A)-C_p(A,B)\min\set{a(\rho_A),a(\rho_B)}\geq 0$.  Indeed, if $a(\rho_B)=0$ then clearly this is $0$, and else we have
\[\abs{\rho_B/\rho_B^{G_0}}a(\rho_A)-C_p(A,B)\min\set{a(\rho_A),a(\rho_B)}=\abs{\rho_B/\rho_B^{G_0}}-1\geq 0.\]

Now, by symmetry, we proceed under the assumption that both $a(\rho_A),a(\rho_B)>1$.  In particular, it follows from \Cref{lem:tensorprod tame part} and \Cref{thm:swancond of tensor of abvars} that
\begin{align*}
    a(\rho_A\otimes\rho_B) =& \abs{\frac{\rho_A\otimes\rho_B}{\braces{\rho_A\otimes\rho_B}^{G_0}}}+\Sw(\rho_A\otimes\rho_B)\\
    \leq& 
     \abs{\rho_A}a(\rho_B)+\abs{\rho_B}a(\rho_A)
    -\bigg(\deg(A\boxtimes B)-\deg(A)\deg(B)\bigg)\\
    &-\bigg(\abs{\frac{\rho_A}{\rho_A^{G_0}}}\abs{\frac{\rho_B}{\rho_B^{G_0}}} + \max\set{2, p-1}\min \set{\Sw(\rho_A), \Sw(\rho_B)}\bigg).
\end{align*}
Now, since
\begin{align*}
    &\abs{\frac{\rho_A}{\rho_A^{G_0}}}\abs{\frac{\rho_B}{\rho_B^{G_0}}} + \max\set{2, p-1}\min \set{\Sw(\rho_A), \Sw(\rho_B)}
    \\&\geq C_p(A,B)\bigg(\max\set{\abs{\frac{\rho_A}{\rho_A^{G_0}}},\abs{\frac{\rho_B}{\rho_B^{G_0}}}}+\min\set{\Sw(\rho_A),\Sw(\rho_B)}\bigg)
    \\&\geq C_p(A,B)\min\set{a(\rho_A),\ a(\rho_B)},
\end{align*}
the result follows.
\end{proof}

This bound requires a reasonable amount of information to compute, and so we provide a weaker, but more uniform, bound as a corollary below.
\begin{corollary}\label{cor:simplified conductor exp bound}
    Let $A$ and $B$ be abelian varieties over a non-Archimedean local field $F$ of residue characteristic $p$.  Then
    \[a(\rho_A\otimes \rho_B)\leq \abs{\rho_A}a(\rho_B)+\abs{\rho_B}a(\rho_A)-2\min\set{a(\rho_A),a(\rho_B)}-\Delta(A,B),\]
    where
    \[\Delta(A,B):=\begin{cases}
        \deg(A\boxtimes B)-\deg(A)\deg(B)&\textnormal{if }a(\rho_A)a(\rho_B)>1,\\
        0&\textnormal{else.}
    \end{cases}\]    
\end{corollary}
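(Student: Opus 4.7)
The plan is to derive the corollary from \Cref{thm:tensors of abvars} by case analysis on the values of $a(\rho_A)$ and $a(\rho_B)$. The guiding observation is that the constant $C_p(A,B)$ from that theorem is at least $2$ except in a very constrained situation, and that constrained situation is precisely the one in which the exact ``moreover'' formula of \Cref{thm:tensors of abvars} is available and can be verified against the simplified bound by hand.

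The first step I would take is to establish the following key fact: if $a(\rho_A)\geq 2$ for an abelian variety $A/F$, then $\abs{\rho_A}-\deg(A)\geq 2$. Writing $\rho_A=\tau\oplus(\sigma\otimes\sp(2))$ as in \Cref{prop:abvarrep}, one has
\[\abs{\rho_A}-\deg(A) = \big(\abs{\tau}-\abs{\tau^{G_0}}\big) + \abs{\sigma} + \big(\abs{\sigma}-\abs{\sigma^{G_0}}\big),\]
whose first summand is a non-negative even integer by \Cref{lemsymp} and whose remaining summands are non-negative. A short check then shows that the values $0$ and $1$ each force a structure on $\tau$ and $\sigma$ that would imply $a(\rho_A)\leq 1$, contradicting the hypothesis.

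With this fact in hand, I would split into three cases. If both $a(\rho_A),a(\rho_B)\geq 2$, then the key fact applied to each gives $C_p(A,B)\geq 2$, and the corollary (with $\Delta(A,B)=\deg(A\boxtimes B)-\deg(A)\deg(B)$) follows directly from \Cref{thm:tensors of abvars}. If instead $a(\rho_A)\leq 1$ (by symmetry), the moreover clause provides the equality
\[a(\rho_A\otimes\rho_B) = \abs{\rho_A}a(\rho_B) + \deg(B)a(\rho_A) - \big(\deg(A\boxtimes B)-\deg(A)\deg(B)\big).\]
When $a(\rho_A)=0$ the representation $\rho_A$ is unramified with trivial monodromy, so $\deg(A\boxtimes B)=\deg(A)\deg(B)$ and the formula matches the corollary's bound on the nose. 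When $a(\rho_A)=1$ and $a(\rho_B)\geq 2$, the required inequality reduces to $\abs{\rho_B}-\deg(B)\geq 2$, which is once again the key fact.

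The main obstacle is the remaining subcase $a(\rho_A)=a(\rho_B)=1$, where $\Delta(A,B)=0$ but the moreover formula presents a mismatch between $\deg(B)a(\rho_A)$ and $\abs{\rho_B}a(\rho_A)-2\min\set{a(\rho_A),a(\rho_B)}$. Here the analysis behind the key fact pins down the structural shape $\rho_\bullet=\tau_\bullet\oplus(\chi_\bullet\otimes\sp(2))$ with $\tau_\bullet$ unramified and $\chi_\bullet$ an unramified character, so that $\abs{\rho_\bullet}-\deg(\bullet)=1$. Expanding $\rho_A\otimes\rho_B$ using $\sp(2)\otimes\sp(2)\cong\sp(3)\oplus\sp(1)$ yields $\deg(A\boxtimes B)-\deg(A)\deg(B)=1$ by a direct calculation, which is precisely the quantity needed to absorb the mismatch and close the last case.
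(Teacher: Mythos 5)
Your proposal is correct and follows essentially the same route as the paper's proof: both reduce to \Cref{thm:tensors of abvars} via the observation that $C_p(A,B)\geq 2$ unless (WLOG) $\abs{\rho_A}-\deg(A)\leq 1$, which forces $a(\rho_A)\leq 1$, and then both dispose of the remaining cases using the ``moreover'' equality. Your explicit computation that $\deg(A\boxtimes B)-\deg(A)\deg(B)=1$ when $a(\rho_A)=a(\rho_B)=1$ is a correctly worked-out version of what the paper compresses into the assertion that ``both sides are equal to $2$''.
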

\begin{proof}
    This is immediate from \Cref{thm:tensors of abvars} so long as $C_p(A,B)\geq 2$, and so it remains to show this precisely when (without loss of generality) $\abs{\rho_A}-\deg(\rho_A)\leq 1$.  In this case, it is clear from \Cref{spcond} and \Cref{prop:abvarrep} that in fact $a(\rho_A)\leq 1$.  In this case, writing $G_0\leq G_F$ for the inertia subgroup, it follows from \Cref{thm:tensors of abvars} that
    \begin{align*}
    a(\rho_A \otimes \rho_B) &= \abs{\rho_A}a(\rho_B) + \deg(B)a(\rho_A) - \bigg(\deg(A\boxtimes B)-\deg(A)\deg(B)\bigg)\\
    &=\abs{\rho_A}a(\rho_B)+\abs{\rho_B}a(\rho_A)-\abs{\rho_B/\rho_B^{G_0}}a(\rho_A)-\bigg(\deg(A\boxtimes B)-\deg(A)\deg(B)\bigg).
    \end{align*}
    In particular, it suffices to prove the new claim that
    \[\abs{\rho_B/\rho_B^{G_0}}a(\rho_A)+\bigg(\deg(A\boxtimes B)-\deg(A)\deg(B)\bigg)\geq 2\min\set{a(\rho_A),a(\rho_B)}+\Delta(A,B).\]
    If $a(\rho_A)=0$ then this is immediate, so via the symmetric argument for $B$ we are left with the case that $a(\rho_A)=1$ and $a(\rho_B)\geq 1$.  If $a(\rho_B)=1$, then both sides are equal to $2$; else $a(\rho_B)>1$, in which case $\abs{\rho_B/\rho_B^{G_0}}\geq 2$ and so the claim is again clear.
\end{proof}

We now remark on a useful bound for the difference in the degrees in the case of a single abelian variety.
\begin{lemma}\label{lem:degree difference for single abvar}
Let $A$ be an abelian variety over a non-Archimedean local field $F$.  Suppose $a(\rho_A)>0$.  Then
    \[\deg\braces{\rho_A\otimes\rho_A}-\deg(\rho_A)^2\geq 1.\]
\end{lemma}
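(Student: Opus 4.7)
The plan is to express both $\deg(\rho_A\otimes \rho_A)$ and $\deg(\rho_A)^2$ as inner products on the inertia subgroup $G_0\leq G_F$ and then exploit the self-duality of $\Res_{G_0}\rho_A$ that comes for free from \Cref{prop:abvarrep}.

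First, I would invoke \Cref{prop:abvarrep} to write $\rho_A\cong \tau\oplus(\sigma\otimes \sp(2))$, with $\tau$ essentially symplectic and $\sigma$ self-dual. Since the Weil group acts on $\sp(n)$ through unramified characters, $\Res_{G_0}\sp(2)\cong \triv^{\oplus 2}$, hence $\Res_{G_0}(\sigma\otimes \sp(2))\cong \sigma^{\oplus 2}$. Twisting by an unramified character has no effect on inertia, so $\Res_{G_0}\tau$ is genuinely symplectic, in particular self-dual. Consequently $\Res_{G_0}\rho_A$ is self-dual as a $G_0$-representation.

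Next, decompose $\Res_{G_0}\rho_A\cong \bigoplus_i m_i\chi_i$ into isotypic components, with the $\chi_i$ pairwise distinct irreducibles. By definition $\deg(\rho_A)=\abs{\rho_A^{G_0}}=m_{\triv}$, the multiplicity of the trivial character. Computing via characters and using the self-duality established above,
\[\deg(\rho_A\otimes \rho_A)=\abs{(\rho_A\otimes \rho_A)^{G_0}}=\gp{\rho_A,\rho_A^*}_{G_0}=\gp{\rho_A,\rho_A}_{G_0}=\sum_i m_i^2.\]
Subtracting therefore yields
\[\deg(\rho_A\otimes \rho_A)-\deg(\rho_A)^2=\sum_{\chi_i\neq \triv}m_i^2.\]

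Finally, the hypothesis $a(\rho_A)>0$ forces $\rho_A$ to be ramified, so some non-trivial $\chi_i$ appears with multiplicity $m_i\geq 1$, giving $\sum_{\chi_i\neq \triv}m_i^2\geq 1$, as required. I do not anticipate any serious obstacle; the only subtlety is the careful bookkeeping of self-duality as one restricts from the Weil group to inertia.
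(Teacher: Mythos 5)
There is a genuine gap, and it is concentrated in one conflation: you identify $\deg(\rho_A)$ with $\abs{(\Res_{G_0}\rho_A)^{G_0}}$ after replacing $\Res_{G_0}\sp(2)$ by $\triv^{\oplus 2}$. This discards the monodromy operator $N$, and the degree of the $L$-polynomial is computed from inertia invariants inside $\ker N$, not from inertia invariants of the underlying Weil representation. Concretely, $\deg\braces{\sigma\otimes\sp(n)}=\abs{\sigma^{G_0}}$, not $n\abs{\sigma^{G_0}}$. So with $\rho_A\cong\tau\oplus\braces{\sigma\otimes\sp(2)}$ as in \Cref{prop:abvarrep}, the correct degree is $\deg(\rho_A)=\abs{\tau^{G_0}}+\abs{\sigma^{G_0}}$, whereas your $m_{\triv}$ equals $\abs{\tau^{G_0}}+2\abs{\sigma^{G_0}}$. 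Similarly, after decomposing $\sp(2)\otimes\sp(2)$ into special representations (\Cref{lem:conductor of TP of spn spm}), one finds $\deg(\rho_A\otimes\rho_A)=\gp{\tau,\tau}_{G_0}+2\gp{\tau,\sigma}_{G_0}+2\gp{\sigma,\sigma}_{G_0}$, whereas your $\sum_i m_i^2$ equals $\gp{\tau,\tau}_{G_0}+4\gp{\tau,\sigma}_{G_0}+4\gp{\sigma,\sigma}_{G_0}$. Your final formula $\deg(\rho_A\otimes\rho_A)-\deg(\rho_A)^2=\sum_{\chi_i\neq\triv}m_i^2$ is therefore not an identity about the quantities in the statement.

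The same oversight sinks the last step: $a(\rho_A)>0$ does \emph{not} force $\Res_{G_0}\rho_A$ to contain a nontrivial irreducible. For an elliptic curve $A$ with split multiplicative reduction, $\rho_A\cong\triv\otimes\sp(2)$ up to unramified twist, so inertia acts trivially on the underlying Weil representation, yet $a(\rho_A)=1$ by \Cref{spcond}. In this case your right-hand side $\sum_{\chi_i\neq\triv}m_i^2$ is $0$, while the true difference is $\deg(\sp(2)\otimes\sp(2))-\deg(\sp(2))^2=2-1=1$. The correct computation produces the extra summand $\sum_{\chi}\gp{\sigma,\chi}_{G_0}^2$ beyond $\sum_{\chi\neq\triv}\braces{\gp{\tau,\chi}_{G_0}+\gp{\sigma,\chi}_{G_0}}^2$, and it is exactly this extra term that is positive in the semistable-but-not-good case ($\sigma\neq 0$), which is the case your argument misses. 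The fix is to keep the $\sp(2)$ factors throughout, use \Cref{lem:conductor of TP of spn spm} to compute the degrees, and then split the conclusion into the two cases $\tau\neq\tau^{G_0}$ and $\sigma\neq 0$ guaranteed by $a(\rho_A)>0$ via \Cref{spcond}.
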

\begin{proof}
    By \Cref{prop:abvarrep}, we write $\rho_A=\tau\oplus \braces{\sigma\otimes \sp(2)}$, where $\sigma,\tau$ are self-dual Weil representations.  Write $G:=\gal(L/F^{\nr})$ for a finite Galois group through which these representations factor, $G_0$ for the inertia subgroup, and $\Irr(G_0)$ for the set of irreducible (complex) characters of $G_0$.  Firstly, we identify
    \begin{align*}
        \deg(\rho_A)&=\abs{\tau^{G_0}}+\abs{\sigma^{G_0}}\\
        &=\gp{\tau,\triv}_{G_0}+\gp{\sigma,\triv}_{G_0}.
    \end{align*}
    Now, by \Cref{lem:conductor of TP of spn spm},
    \begin{align*}
    \deg(\rho_A\otimes\rho_A)&=\abs{(\tau\otimes\tau)^{G_0}}+2\abs{(\tau\otimes\sigma)^{G_0}}+2\abs{(\sigma\otimes\sigma)^{G_0}}\\
    &=\gp{\tau,\tau}_{G_0}+2\gp{\tau,\sigma}_{G_0}+2\gp{\sigma,\sigma}_{G_0}\\
    &=\sum_{\chi\in\Irr(G_0)}\gp{\tau,\chi}_{G_0}^2+2\gp{\sigma,\chi}_{G_0}^2+2\gp{\sigma,\chi}_{G_0}\gp{\tau,\chi}_{G_0}\\
    &=\sum_{\chi\in\Irr(G_0)}\braces{\gp{\tau,\chi}_{G_0}+\gp{\sigma,\chi}_{G_0}}^{2}+\gp{\sigma,\chi}_{G_0}.
    \end{align*}
    In particular,
    \[\deg(\rho_A\otimes\rho_A)-\deg(\rho_A)^2=\sum_{\substack{\chi\in\Irr(G_0)\\\chi\neq\triv}}\braces{\gp{\tau,\chi}_{G_0}+\gp{\sigma,\chi}_{G_0}}^{2}+\sum_{\chi\in\Irr(G_0)}\gp{\sigma,\chi}_{G_0}.\]
    Since $a(\rho_A)>0$, by \Cref{spcond} we note that either $\tau\neq \tau^{G_0}$ or $\sigma\neq 0$.  In particular this difference is at least $1$.
\end{proof}

\subsection{Global results}\label{subsec:global results}
Having shown our local results, we conclude by deducing the global results.

\begin{corollary}[\Cref{INTRO:thm:Cond of TP}]\label{cor:Cond of TP}
    Let $A,B$ be abelian varieties over a global field $K$ of conductors $\fN_A$ and $\fN_B$. Define
    \[\fd(A,B):=\prod_{\substack{\fp\in\finplaces_K\\v_\fp(\fN_A)v_\fp(\fN_B)>1}}\fp^{\deg_\fp(A\boxtimes B)-\deg_\fp(A)\deg_\fp(B)}.\]
    Then 
    \[N(A\boxtimes B)\hspace{2pt}\Bigg\vert\hspace{2pt} \frac{\fN_A^{2\dim B}\fN_B^{2\dim A}}{\fd(A,B)\gcd(\fN_A,\fN_B)^2},\]
    where $N(A\boxtimes B)$ is the conductor of the Rankin--Selberg $L$-function $L(A\boxtimes B, s)$.
\end{corollary}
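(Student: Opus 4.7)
My plan is to reduce this global divisibility to a local inequality at each finite place of $K$ and then invoke \Cref{cor:simplified conductor exp bound}.  Writing $\rho_{A,\fp},\rho_{B,\fp}$ for the Weil--Deligne representations of $A,B$ at the completion $K_\fp$, the Rankin--Selberg $L$-function is defined place-by-place via local Langlands, so the global conductor factors as $N(A\boxtimes B) = \prod_{\fp \in \finplaces_K}\fp^{a(\rho_{A,\fp}\otimes\rho_{B,\fp})}$.  Similarly $v_\fp(\fN_A) = a(\rho_{A,\fp})$ and $v_\fp(\fN_B) = a(\rho_{B,\fp})$.  Comparing $\fp$-adic valuations on both sides of the claimed divisibility thus reduces everything to showing, for every $\fp\in\finplaces_K$, the local inequality
\[a(\rho_{A,\fp}\otimes\rho_{B,\fp}) \leq 2\dim(B)\,a(\rho_{A,\fp}) + 2\dim(A)\,a(\rho_{B,\fp}) - 2\min\{a(\rho_{A,\fp}), a(\rho_{B,\fp})\} - \delta_\fp,\]
where $\delta_\fp := \deg_\fp(A\boxtimes B)-\deg_\fp(A)\deg_\fp(B)$ if $v_\fp(\fN_A)v_\fp(\fN_B)>1$, and $\delta_\fp := 0$ otherwise.

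I would then verify that this local inequality is literally the content of \Cref{cor:simplified conductor exp bound}.  Since $\abs{\rho_{A,\fp}}=2\dim A$ and $\abs{\rho_{B,\fp}}=2\dim B$, the leading terms match directly.  The piecewise correction $\Delta(A,B)$ from \Cref{cor:simplified conductor exp bound} is controlled by whether $a(\rho_{A,\fp})a(\rho_{B,\fp})>1$, which is precisely the condition $v_\fp(\fN_A)v_\fp(\fN_B)>1$ that defines $\delta_\fp$.  Hence $\delta_\fp$ and $\Delta(A,B)$ coincide at every prime, and the desired local bound is immediate.

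The only substantive input beyond this bookkeeping is the identification of local $L$-factors: that the $\fp$-part of the Rankin--Selberg conductor really is $a(\rho_{A,\fp}\otimes\rho_{B,\fp})$.  This is standard via the local Langlands correspondence for $\GL_n$ and is already implicit in the statement of \Cref{INTRO:thm:cond of tensor of abvars}.  Granting this, there is no genuine obstacle --- the corollary is simply a global repackaging of \Cref{cor:simplified conductor exp bound}, with the appearance of $\gcd(\fN_A,\fN_B)^2$ encoding the $-2\min\{a(\rho_{A,\fp}),a(\rho_{B,\fp})\}$ term and $\fd(A,B)$ encoding the $\Delta(A,B)$ terms.
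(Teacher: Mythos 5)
Your proposal is correct and matches the paper's approach exactly: the paper's proof is the one-line "Immediate from \Cref{cor:simplified conductor exp bound}", and your place-by-place comparison of valuations (identifying $\fN_A^{2\dim B}\fN_B^{2\dim A}$, $\gcd(\fN_A,\fN_B)^2$ and $\fd(A,B)$ with the corresponding terms of the local bound) is precisely the bookkeeping that one-liner leaves implicit.
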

\begin{proof}
    Immediate from \Cref{cor:simplified conductor exp bound}.
\end{proof}

\begin{corollary}\label{cor:cond of self TP}
    Let $A$ be an abelian variety over a global field $K$ of conductor $\fN_A$.  Moreover, write $\fN_{A,2}:=\prod\limits_{\substack{\fp\in\finplaces_K, \\v_{\fp}(\fN_A)\geq 2}}\fp$.  Then
    \[N(A\boxtimes A)\hspace{2pt}\Bigg\vert\hspace{2pt} \frac{\fN_A^{4\dim A - 2}}{\fN_{A,2}},\]
    where $N(A\boxtimes A)$ is the conductor of the Rankin--Selberg $L$-function $L(A\boxtimes A, s)$.
\end{corollary}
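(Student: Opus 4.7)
The plan is to derive this from \Cref{cor:simplified conductor exp bound} applied in the special case $B = A$, combined with \Cref{lem:degree difference for single abvar} to handle the deeply ramified primes. Since the global conductor $N(A\boxtimes A)$ factors as $\prod_{\fp}\fp^{a(\rho_{A,\fp}\otimes \rho_{A,\fp})}$, it suffices to prove at each finite place $\fp\in\finplaces_K$ the local inequality
\[a(\rho_{A,\fp}\otimes\rho_{A,\fp}) \leq (4\dim A -2)v_\fp(\fN_A) - v_\fp(\fN_{A,2}),\]
and then take the product.

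First I would specialise the simplified bound of \Cref{cor:simplified conductor exp bound} to $B=A$. Noting $\abs{\rho_A}=2\dim A$, this yields
\[a(\rho_A\otimes\rho_A)\leq (4\dim A -2)a(\rho_A)-\Delta(A,A),\]
where $\Delta(A,A)=\deg(A\boxtimes A)-\deg(A)^2$ when $a(\rho_A)\geq 2$, and is zero otherwise. The right-hand side of the claimed divisibility is precisely $(4\dim A-2)a(\rho_{A,\fp})$ minus $1$ whenever $v_\fp(\fN_A)\geq 2$, so the task reduces to verifying that $\Delta(A,A)\geq 1$ in exactly that regime.

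Next I would split into cases on $a(\rho_{A,\fp})$. If $a(\rho_{A,\fp})\in\set{0,1}$, then $\fp\nmid \fN_{A,2}$ and $\Delta(A,A)=0$, so the local inequality is immediate from the bound above. If $a(\rho_{A,\fp})\geq 2$, then $\fp\mid \fN_{A,2}$ contributes exactly $\fp^1$, while \Cref{lem:degree difference for single abvar} (whose hypothesis $a(\rho_A)>0$ is satisfied) gives $\Delta(A,A)\geq 1$. Combining these yields the local bound, and taking the product over $\fp$ produces the asserted divisibility.

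No step here is genuinely hard: the whole argument is an exercise in bookkeeping around the two thresholds $a(\rho_{A,\fp})=0,1,\geq 2$. The only subtle point is ensuring that the integer $\Delta(A,A)$ of \Cref{cor:simplified conductor exp bound} meshes correctly with the denominator $\fN_{A,2}$ in the claimed formula, which is precisely the content of \Cref{lem:degree difference for single abvar}.
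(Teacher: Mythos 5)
Your proof is correct and is essentially the paper's argument: the paper simply routes through \Cref{cor:Cond of TP} with $B=A$ (which is itself immediate from \Cref{cor:simplified conductor exp bound}) and then invokes \Cref{lem:degree difference for single abvar} to see that $\fN_{A,2}$ divides $\fd(A,A)$, exactly the same bookkeeping you carry out place by place.
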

\begin{proof}
    It follows from \Cref{cor:Cond of TP} that $N(A\boxtimes A)\mid \frac{\fN_A^{4\dim A - 2}}{\fd(A,A)}$, and so it is sufficient to show that $\fN_{A,2}\mid \fd(A,A)$.  This is immediate from \Cref{lem:degree difference for single abvar}.
\end{proof}

\subsection{Sharpness: a family of examples}\label{subsec:sharpness}
The purpose of this subsection to provide a family of nontrivial examples of sharpness for each $p$ for the inequalities of \Cref{INTRO:thm:cond of tensor of abvars,INTRO:thm:swancond of tensor of abvars}. Our examples will be Jacobians $J_\alpha$, indexed by $\alpha\in\ZZ_p$, of the curves
\[C_{\alpha}/\QQ_p: y^2=x^p-\alpha.\]

\begin{proposition}
\label{thm:sharpness1}
    Let $p$ be an odd prime number, and $a \in \ZZ_p^{\times}$ such that $a^{p-1} \not\equiv 1 \bmod{p^2}$.  Then the inequalities in \Cref{INTRO:thm:cond of tensor of abvars,INTRO:thm:swancond of tensor of abvars} are equalities when $A=J_a, B=J_p$.
\end{proposition}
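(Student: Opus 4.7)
The plan is to compute both sides of the inequalities in \Cref{INTRO:thm:cond of tensor of abvars,INTRO:thm:swancond of tensor of abvars} explicitly for the pair $A = J_a, B = J_p$, and to verify equality in both. The key structural observation is that both curves $C_a$ and $C_p$ become isomorphic to $C_1: y^2 = x^p - 1$ via a substitution $(x,y) = (\sqrt[p]{\alpha}\,X,\sqrt{\alpha}\,Y)$, so their Jacobians have closely related $\ell$-adic representations.

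First, I would identify the Weil--Deligne representations. The curve $C_\alpha$ carries the order-$p$ automorphism $(x,y)\mapsto (\zeta_p x, y)$, which is defined over $K := \QQ_p(\zeta_p)$ and induces a $\mu_p$-action on $J_\alpha$.  Since the $\mu_p$-fixed part corresponds to the Jacobian of the quotient $C_\alpha/\mu_p \cong \PP^1$, dimension counting on $V_\ell J_\alpha$ (of dimension $p-1$) forces each of the $p-1$ nontrivial $\mu_p$-characters to appear with multiplicity one.  The Galois group $\Gal(K/\QQ_p) \cong (\ZZ/p)^\times$ permutes these characters transitively, hence $\rho_{J_\alpha} \cong \operatorname{Ind}_{G_K}^{G_{\QQ_p}}\psi_\alpha$ for some character $\psi_\alpha$ of $G_K$.

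Next I would pin down $\psi_a$ and $\psi_p$ via local class field theory.  The substitution above identifies $C_\alpha$ with $C_1$ over $K(\sqrt[p]{\alpha},\sqrt{\alpha})$, and $C_1$ acquires good reduction over the tamely ramified extension $K(\sqrt{1-\zeta_p})/K$, as the change of variables $X = 1 + (1-\zeta_p)u$ reduces to the smooth curve $\bar Y^2 = u^p - u$.  The hypothesis $a^{p-1}\not\equiv 1 \pmod{p^2}$ is equivalent to $a\notin(\QQ_p^\times)^p$, and a Newton polygon calculation shows that both $K(\sqrt[p]{a})/K$ and $K(\sqrt[p]{p})/K$ are then totally wildly ramified extensions of degree $p$, with explicit ramification breaks depending on the $p$-adic valuation of $\alpha$.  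From this I would read off $\Sw(\psi_a),\Sw(\psi_p),a(\psi_a),a(\psi_p)$, and then use the behaviour of these invariants under induction along the tame extension $K/\QQ_p$ to compute $\Sw(\rho_{J_a}),\Sw(\rho_{J_p}),a(\rho_{J_a}),a(\rho_{J_p})$.  For the tensor product, Mackey's formula rewrites $\rho_{J_a}\otimes\rho_{J_p}$ as a direct sum of induced characters from $G_K$ (or a conjugate subgroup), and the same machinery computes its Swan and Artin conductors directly.

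The main obstacle will be the careful bookkeeping of the ramification invariants for the two wildly ramified degree-$p$ subextensions, and comparing the various terms on both sides of the inequalities.  The hypothesis $a^{p-1}\not\equiv 1\pmod{p^2}$ is precisely what ensures that $K(\sqrt[p]{a})/K$ is totally wildly ramified; combined with the fact that $\sqrt[p]{p}$ generates a distinct wildly ramified extension (arising from a uniformizer rather than a unit), this independence of wild ramification should force the minimum term in each inequality to be saturated, yielding equality.
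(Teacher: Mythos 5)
Your strategy is sound and genuinely different from the paper's. You realise each $\rho_{J_\alpha}$ as $\operatorname{Ind}_{G_K}^{G_{\QQ_p}}\psi_\alpha$ for a character $\psi_\alpha$ of $G_K$, $K=\QQ_p(\zeta_p)$, and propose to compute everything via the conductor--discriminant formula and Mackey's formula. The paper instead works only with inertia: it computes $\Sw(\rho_{J_a})=1$ and $\Sw(\rho_{J_p})=p$ from a discriminant formula (\Cref{lem:inertiaexample1}), and uses rationality of characteristic polynomials to identify $\Res_{G_1}\rho_{J_\alpha}\cong\bigoplus_{i=1}^{p-1}\chi_\alpha^{\otimes i}$, then evaluates the Swan conductor of the tensor product directly from the definition. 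The two pictures are compatible (your $\psi_\alpha^\sigma$ restrict on wild inertia to the paper's $\chi_\alpha^{\otimes i}$), and yours has the advantage of exhibiting the full Weil--Deligne representation over $\QQ_p$, at the cost of needing to pin down $\psi_\alpha$ via class field theory, which the paper's purely inertial argument avoids.

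However, as written the argument stops exactly where the proposition begins: everything after ``From this I would read off\dots'' is a promissory note. You never compute $\Sw(\psi_a)=1$ and $\Sw(\psi_p)=p$ --- this is where the hypothesis $a^{p-1}\not\equiv 1\bmod{p^2}$ enters, forcing the ramification break of $K(\sqrt[p]{a})/K$ to be $1$ rather than $0$ --- and you never evaluate either side of the inequalities. The decisive point, which your final sentence gestures at but does not prove, is that since $\Sw(\psi_a)=1\neq p=\Sw(\psi_p^{\sigma})$, one has $\Sw(\psi_a\psi_p^{\sigma})=\max\{\Sw(\psi_a),\Sw(\psi_p^{\sigma})\}=p$ for every $\sigma\in\Gal(K/\QQ_p)$, whence $\Sw(\rho_{J_a}\otimes\rho_{J_p})=\sum_{\sigma}\Sw(\psi_a\psi_p^{\sigma})=p(p-1)$, which matches the right-hand side $(p-1)p+(p-1)\cdot 1-(p-1)\cdot 1$. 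For the Artin conductor one must further observe that no inertia invariants occur anywhere (so all degree terms vanish and $C_p(J_a,J_p)=p-1$) and that the tame part contributes $p-1$ to each factor and $(p-1)^2$ to the tensor product. Note also that ``independence of wild ramification'' is not the right heuristic: equality is forced by the two Swan slopes being \emph{distinct}, not independent. Without these computations the proof is incomplete.
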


\begin{lemma}
\label{lem:inertiaexample1}
    Let $p$ be an odd prime, and $a\in\ZZ_p^\times$ such that $a^{p-1}\not\equiv 1 \bmod{p^2}$. Then $\Sw(\rho_{J_a})=1$ and $\Sw(\rho_{J_p})=p$.
\end{lemma}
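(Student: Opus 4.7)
The plan is to realise $\rho_{J_\alpha}$ as an induced character from $G_K$, where $K:=\QQ_p(\zeta_p)$, and then reduce the Swan computation to an explicit Kummer-theoretic calculation.

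First I would exploit the automorphism $(x,y)\mapsto(\zeta_p x,y)$ of $C_\alpha$, defined over $K$, to endow $J_\alpha$ with a $\ZZ[\zeta_p]$-action over $K$. Extending scalars, $V_\ell(J_\alpha)\otimes\overline{\QQ}_\ell$ decomposes as a direct sum of $(p-1)$ one-dimensional $G_K$-eigenspaces for $\mu_p$, and since $\Gal(K/\QQ_p)=(\ZZ/p)^\times$ permutes these eigenspaces transitively via the mod-$p$ cyclotomic character, we obtain $\rho_{J_\alpha}\cong \mathrm{Ind}_{G_K}^{G_{\QQ_p}}\phi_\alpha$ for a character $\phi_\alpha$ of $G_K$. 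Interpreting $J_\alpha$ as the $\mu_p$-twist of $J_1$ by $\alpha\in \QQ_p^\times/(\QQ_p^\times)^p$, this character differs from the corresponding character for $J_1$ by a nonzero power of the Kummer character $\chi_\alpha\colon G_K\to\mu_p$, $\sigma\mapsto\sigma(\alpha^{1/p})/\alpha^{1/p}$.

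Next I would check that the $J_1$-contribution is tame: writing $\lambda:=\zeta_p-1$, the substitution $x=1+\lambda X$ followed by $Y:=y/(\lambda^{(p-1)/2}\sqrt{\lambda})$ exhibits $C_1$ over $K(\sqrt{\lambda})$ as $Y^2=\prod_{i=0}^{p-1}(X-c_i)$ with $c_i:=(\zeta_p^i-1)/\lambda$, whose reductions $\bar c_i$ are the distinct elements $0,1,\ldots,p-1\in\mathbb{F}_p$.  Thus $J_1$ has good reduction over the tame extension $K(\sqrt{\lambda})/K$, making the $J_1$-character tame, so that $\Sw_K(\phi_\alpha)=\Sw_K(\chi_\alpha)$.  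Combining this with the standard formula for the Artin conductor of an induced representation (and the tameness of $K/\QQ_p$) gives $\Sw_{\QQ_p}(\rho_{J_\alpha})=\Sw_K(\chi_\alpha)$, which equals the unique lower-numbering ramification break $b_\alpha$ of the cyclic degree-$p$ extension $K(\alpha^{1/p})/K$.

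Finally I would compute these breaks.  For $\alpha=p$, the element $\pi_L:=\lambda/p^{1/p}$ is a uniformizer of $L:=K(p^{1/p})$; since $\sigma(p^{1/p})=\zeta_p p^{1/p}$ for a Galois generator, $\sigma(\pi_L)-\pi_L=\pi_L(\zeta_p^{-1}-1)$ has $v_L$-valuation $p+1$, so $b_p=p$.  For $\alpha=a\in \ZZ_p^\times$ with $a^{p-1}\not\equiv 1\pmod{p^2}$, dividing by the $p$-th root (in $K$) of the Teichm\"uller lift of $\bar a$ reduces modulo $(K^\times)^p$ to $u=1+\lambda^{p-1}c$ with $c\in \mathcal{O}_K^\times$; a direct analysis of $K(u^{1/p})/K$ yields $v_L(u^{1/p}-1)=p-1$, a uniformizer $\lambda/(u^{1/p}-1)$, and ultimately $b_a=1$.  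I expect the main obstacle to be the clean identification of $\rho_{J_\alpha}$ as the relevant induced representation, and correctly tracking the Kummer character through the twist; the ramification-break computations are then routine valuation arithmetic.
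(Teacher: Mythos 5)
Your proposal is correct, but it takes a genuinely different route from the paper. The paper's proof is essentially two lines: it invokes the conductor formula of \cite{MR4453743}*{Theorem 12.3}, which expresses $\Sw(J_\alpha)$ as $v(\Delta_{\QQ_p(\sqrt[p]{\alpha})/\QQ_p})-[\QQ_p(\sqrt[p]{\alpha}):\QQ_p]+f$, observes that $x^p-p$ and $(x+a+p)^p-a$ are Eisenstein (the latter is exactly where $a^{p-1}\not\equiv 1\bmod p^2$ is used), and reads off $v(\Delta)=2p-1$ and $p$ respectively. Your argument instead rebuilds the answer from first principles: the $\ZZ[\zeta_p]$-action gives $\rho_{J_\alpha}\cong\mathrm{Ind}_{G_K}^{G_{\QQ_p}}\phi_\alpha$ with $K=\QQ_p(\zeta_p)$, the $J_1$-part is tame by your good-reduction model over $K(\sqrt{\lambda})$, and the Swan exponent reduces to the ramification break of $K(\alpha^{1/p})/K$, which you compute correctly ($b_p=p$, $b_a=1$). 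What your route buys is self-containment and transparency about where the hypothesis on $a$ enters (it forces the principal-unit part of $a$ to be $1+\lambda^{p-1}c$ with $c$ a unit, hence break $1$); what it costs is length and a couple of points that need tightening. First, the isomorphism $C_\alpha\cong C_1$ is $(x,y)\mapsto(x/\alpha^{1/p},\,y/\sqrt{\alpha})$, so the twisting cocycle takes values in $\mu_{2p}$, not $\mu_p$: the extra quadratic character of $K(\sqrt{\alpha})/K$ must be accounted for, though it is harmless because for odd $p$ it is tame (indeed $v_K(p)=p-1$ is even, so $K(\sqrt{p})/K$ is at worst unramified). Second, before reading off $v_L(u^{1/p}-1)=p-1$ from the product $\prod_i(u^{1/p}-\zeta_p^i)=u-1$, one should first note that $K(\alpha^{1/p})/K$ is totally ramified of degree $p$ (immediate from the Eisenstein polynomials over $\QQ_p$); otherwise the possibility that $K(u^{1/p})/K$ is unramified, i.e.\ that $u$ is congruent to a $p$-th power to higher $\lambda$-adic precision, has not been excluded. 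With these two remarks added, the proof is complete and matches the paper's values.
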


\begin{proof}
    We compute the Swan exponent using the formula in \cite{MR4453743}*{Theorem 12.3}; namely, whenever $\alpha \not\in (\QQ_p^{\times})^p$ it is given by 
    \[\Sw(J_{\alpha})=v(\Delta_{\QQ_p(\sqrt[p]{\alpha})/\QQ_p}) - [\QQ_p(\sqrt[p]{\alpha}) : \QQ_p] + f_{\QQ_p(\sqrt[p]{\alpha})/\QQ_p},\]
    where $\Delta, f$ are the discriminant and residue degree respectively.  As $x^p-p$ and $(x+a+p)^p-a$ are Eisenstein, for both $\alpha\in\set{a,p}$ the extension $\QQ_p(\sqrt[p]{\alpha})/\QQ_p$ is totally ramified of degree $p$ with ring of integers $\ZZ_p[\sqrt[p]{\alpha}]$.  In particular, $\Delta_{\QQ_p(\sqrt[p]{\alpha})/\QQ_p}=\pm \alpha^{p-1}p^p$, and the claim follows.
\end{proof}

\begin{proof}[Proof of \Cref{thm:sharpness1}]
We begin with the Swan conductor.  It is immediate from \Cref{lem:inertiaexample1} that \[\abs{\rho_{J_{a}}}\Sw(\rho_{J_{p}}) + \abs{\rho_{J_{p}}}\Sw(\rho_{J_{a}}) - \max\{2,p{-}1\}\min\{\Sw(\rho_{J_{a}}),\Sw(\rho_{J_{p}})\}=p(p-1),\]
so we will now compute $\Sw(\rho_{J_a}\otimes\rho_{J_p})$ and show this is the same.  As the curves $C_{\alpha}$ are hyperelliptic, the image of the wild inertia group under $\rho_{J_{\alpha}}$ is isomorphic to the wild inertia group of the Galois group of $x^p-\alpha$ which is precisely $\FF_p$ for $\alpha \in \set{a,p}$. Since they have rational characteristic polynomial on $G_0$ and dimension $p-1$, we must have
\begin{align*}
    \Res_{G_1}\rho_{J_a}&\cong \bigoplus_{i=1}^{p-1}\chi_a^{\otimes i}& \text{and } \Res_{G_1}\rho_{J_p}\cong \bigoplus_{i=1}^{p-1}\chi_p^{\otimes i},
\end{align*}
where $\chi_\alpha$ is a choice of nontrivial Artin representation over $\QQ_p(\zeta_p)$ with dimension $1$ factoring through $\QQ_p(\sqrt[p]{\alpha},\zeta_p)$.  We then directly compute from the definition that $\Sw(\rho_{J_a}\otimes\rho_{J_p})=p(p-1)$, as required.

For the Artin conductor, when $G_1$ has no fixed points nor does $G_0$ and so it is easy to compute from the above that for $\alpha\in\set{a,p}$, we have $a(\rho_{J_\alpha})=(p-1)+\Sw(\rho_{J_\alpha})$, and that the degree terms are all $0$, so
\begin{align*}
&\abs{\rho_{J_a}}a(\rho_{J_p}) + \abs{\rho_{J_p}}a(\rho_{J_a}) - C_p(J_a,J_p)\min\{a(\rho_{J_a}),a(\rho_{J_p})\} - (\deg(J_a \boxtimes J_p) - \deg(J_a)\deg(J_p))
\\&=p(p-1)+(2p-1)(p-1)-(p-1)p-0
\\&=(2p-1)(p-1).
\end{align*}
Moreover, $a(\rho_{J_a}\otimes\rho_{J_p})=(p-1)^2+\Sw(\rho_{J_a}\otimes\rho_{J_p})=(2p-1)(p-1)$ as required.
\end{proof}
\newpage
\bibliography{refs}
\end{document}